\newcommand\mto{\rightrightarrows}
\begin{document}

\title[Elliptic QVIs under a smallness assumption]{Elliptic quasi-variational inequalities under a smallness assumption: Uniqueness, differential stability and optimal control}
\author[Gerd Wachsmuth]{%
	Gerd Wachsmuth%
}

\maketitle

\begin{abstract}
	We consider a quasi-variational inequality
	governed by a moving set.
	We employ the assumption
	that the movement of the set has a small Lipschitz constant.
	Under this requirement,
	we show that the quasi-variational inequality
	has a unique solution
	which depends Lipschitz-continuously on the source term.
	If the data of the problem is (directionally) differentiable,
	the solution map is directionally differentiable as well.
	We also study the optimal control of the quasi-variational inequality
	and provide necessary optimality conditions of strongly stationary type.
\end{abstract}

\begin{keywords}
	quasi-variational inequality,
	uniqueness,
	directional differentiability,
	strong stationarity
\end{keywords}
\begin{msc}
	\mscLink{47J20},
	\mscLink{49K21},
	\mscLink{35J87}
\end{msc}

\section{Introduction}
We consider the quasi-variational inequality (QVI)
\begin{equation}
	\label{eq:qvi_intro}
	\text{Find } y \in Q(y)
	\quad\text{such that}\quad
	\dual{
		A(y)
		-
		f
	}{v - y}
	\ge
	0
	\qquad
	\forall v \in Q(y)
	.
\end{equation}
Here, $V$ is a Hilbert space,
$A \colon V \to V\dualspace$
is a (possibly nonlinear) mapping,
and $f \in V\dualspace$.
We will not cover the general situation
of a set-valued
mapping $Q \colon V \mto V$,
but
we restrict the treatment of \eqref{eq:qvi_intro}
to the case in which $Q(y)$ is a
\emph{moving set}, i.e.,
\begin{equation}
	\label{eq:moving_set}
	Q(y)
	=
	K + \Phi(y)
\end{equation}
for some non-empty, closed and convex subset $K \subset V$
and $\Phi \colon V \to V$.
It is well-known
that QVIs have
many important real-world applications,
we refer exemplarily to
\cite{BensoussanLions1987,Prigozhin1996,BarrettPrigozhin2013,AlphonseHintermuellerRautenberg2019}
and the references therein.

The main contributions of this paper are the following.
\begin{itemize}
	\item
		We prove existence and uniqueness of solutions to \eqref{eq:qvi_intro}
		under a smallness assumption on the mapping $\Phi$,
		see \cref{sec:QVI_as_VI}.
	\item
		If, additionally, the functions $A$ and $\Phi$ are differentiable
		and if $K$ is polyhedric,
		we establish the directional differentiability of the solution mapping of \eqref{eq:qvi_intro},
		see \cref{sec:diff}.
	\item
		For the associated optimal control problem,
		we derive necessary optimality conditions of strongly stationary type,
		see \cref{sec:optimal_control}.
\end{itemize}
In particular, our results are applicable
if the Lipschitz constant of $\Phi$ is small.
Let us put our work in perspective.
In the following discussion,
we will assume that $A$ is $\mu_A$-strongly monotone
and $L_A$-Lipschitz
and that $\Phi$ is $L_\Phi$-Lipschitz.
We refer to \cref{sec:not} for the definitions.
We further define the condition number of $A$ via $\gamma_A := L_A / \mu_A \ge 1$.
An existence and uniqueness result
for the general QVI \eqref{eq:qvi_intro}
was given in
\cite[Theorem~9]{NoorOettli1994}.
This result can be applied to the moving set case \eqref{eq:moving_set}
via
\cite[Lemma~3.2]{NesterovScrimali2011}.
One obtains the unique solvability of \eqref{eq:qvi_intro}
under the condition
\begin{equation}
	\label{eq:NoorOettli}
	L_\Phi
	<
	1 - \sqrt{1 - 1/\gamma_A^2}
	=
	\frac{1}{\gamma_A \, \paren[\Big]{\gamma_A + \sqrt{\gamma_A^2-1}}}.
\end{equation}
In the work
\cite[Corollary~2]{NesterovScrimali2011}
the requirement was relaxed to
\begin{equation}
	\label{eq:NesterovScrimali}
	L_\Phi
	<
	\frac1{\gamma_A}
	.
\end{equation}
In this work,
we shall show that
\begin{equation}
	\label{eq:my}
	L_\Phi
	<
	\frac{2 \, \sqrt{\gamma_A}}{1 + \gamma_A}
\end{equation}
is sufficient for existence and uniqueness
under the condition that $A$ is the derivative of a convex function.
Note that $A$ is indeed a derivative of a convex function in many important applications.
Moreover, the conditions \eqref{eq:NesterovScrimali} and \eqref{eq:my}
are necessary for uniqueness in the following sense:
Whenever the constants $L_\Phi < 1$, $0 < \mu_A \le L_A$
violate \eqref{eq:NesterovScrimali}
with $\gamma_A := L_A / \mu_A$,
there exist bounded and linear operators
$A \colon V \to V\dualspace$ and $\Phi \colon V \to V$
possessing these constants
such that
\eqref{eq:qvi_intro}
does not have a unique solution for every $f \in V\dualspace$.
If even \eqref{eq:my} is violated,
$A$ can chosen to be symmetric.
We refer to
\cref{thm:sharp_32,thm:sharpness}
below
for the precise formulation of this result.

For a different approach to obtain uniqueness of solutions to \eqref{eq:qvi_intro},
we refer to \cite{Dreves2015}.

To our knowledge,
\cite{AlphonseHintermuellerRautenberg2019}
is the only contribution concerning
differentiability of the solution mapping of \eqref{eq:qvi_intro}.
Their approach is based on monotonicity considerations
and only the differentiability into non-negative directions is obtained.
In what follows,
we are able to relax the assumption required for the differentiability
and we also obtain differentiability in all directions,
see
\cref{thm:differentiability}.

Finally, we are not aware of any contribution in which
stationarity conditions for the optimal control of \eqref{eq:qvi_intro} are obtained.

\section{Notation and preliminaries}
\label{sec:not}
Throughout this work,
$V$ will denote a Hilbert space.
Its dual space is denoted by $V\dualspace$.
The radial cone, the tangent cone
and the normal cone
of a closed, convex set $K \subset V$
at $y \in K$
are given by
\begin{align*}
	\RR_K(y)
	&:=
	\cone(K - y)
	=
	\bigcup_{\alpha > 0} \alpha \, (K-y)
	,
	\qquad
	\TT_K(y)
	:=
	\cl\braces{\RR_K(y)}
	,
	\\
	\TT_K(y)\polar
	&:=
	\set[\big]{\lambda \in V\dualspace \given \dual{\lambda}{v - y} \le 0 \; \forall v \in K},
\end{align*}
respectively.
The critical cone of $K$ w.r.t.\ $(y,\lambda) \in K \times \TT_K(y)\polar$
is given by
\begin{equation*}
	\KK_K(y, \lambda)
	:=
	\TT_K(y) \cap \lambda\anni
	=
	\set[\big]{v \in \TT_K(y) \given \dual{\lambda}{v} = 0 }
	.
\end{equation*}
The set $K$ is called polyhedric at $(y,\lambda)$
if $\KK_K(y,\lambda) = \cl\braces[\big]{\RR_K(y) \cap \lambda\anni}$.
We refer to \cite{Wachsmuth2016:2}
for a recent review of polyhedricity.

A mapping $B\colon V \to V\dualspace$
is called $\mu$-strongly monotone
if $\mu > 0$
satisfies
\begin{equation*}
	\dual{B(y_1) - B(y_2)}{y_1 - y_2}
	\ge
	\mu \, \norm{y_1 - y_2}_V^2
	\qquad
	\forall y_1,y_2 \in V.
\end{equation*}
If $H$ is another Hilbert space,
a mapping $C\colon V \to H$ is called $L$-Lipschitz
for some $L \ge 0$
if
\begin{equation*}
	\norm{C(y_1) - C(y_2)}_H
	\le
	L \, \norm{y_1 - y_2}_V
	\qquad
	\forall y_1,y_2 \in V.
\end{equation*}

The monotonicity of an operator implies
some weak lower semicontinuity.
\begin{lemma}
	\label{lem:wlsc}
	Let $A \colon V \to V\dualspace$ be a monotone operator.
	Suppose that $y_n \weakly y$ in $V$ and $A(y_n) \weakly A(y)$ in $V\dualspace$.
	Then,
	\begin{equation*}
		\liminf_{n \to \infty}
		\dual{A(y_n)}{y_n}
		\ge
		\dual{A(y)}{y}
		.
	\end{equation*}
\end{lemma}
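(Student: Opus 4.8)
The plan is to extract the result directly from the definition of monotonicity, testing against the single point $z = y$ rather than a general element of $V$. Since we are handed the two weak convergences $y_n \weakly y$ and $A(y_n) \weakly A(y)$ as hypotheses, no Minty-type argument or \emph{a priori} boundedness estimate is needed; the whole statement reduces to expanding one inequality and passing to the limit.

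Concretely, I would start from monotonicity applied to the pair $(y_n, y)$, namely $\dual{A(y_n) - A(y)}{y_n - y} \ge 0$, and rearrange it into
\begin{equation*}
	\dual{A(y_n)}{y_n}
	\ge
	\dual{A(y_n)}{y}
	+
	\dual{A(y)}{y_n}
	-
	\dual{A(y)}{y}
	.
\end{equation*}
The two mixed terms on the right are each linear in a single varying argument: $\dual{A(y_n)}{y}$ converges to $\dual{A(y)}{y}$ because $A(y_n) \weakly A(y)$ is tested against the fixed element $y \in V$, and $\dual{A(y)}{y_n}$ converges to $\dual{A(y)}{y}$ because $y_n \weakly y$ is tested against the fixed functional $A(y) \in V\dualspace$. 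Taking the $\liminf$ as $n \to \infty$ therefore collapses the right-hand side to $\dual{A(y)}{y} + \dual{A(y)}{y} - \dual{A(y)}{y} = \dual{A(y)}{y}$, which is exactly the claimed inequality.

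There is essentially no hard step here; the only thing worth flagging is the conceptual point that one need \emph{not} test monotonicity against a general $z \in V$ (as one would in a Minty or pseudomonotonicity argument). The crucial product $\dual{A(y_n)}{y_n}$ that we want to bound below already appears on the left after choosing $z = y$, and the remaining two pairings each have one weakly convergent and one fixed factor, so they pass to the limit by the very definition of weak convergence. This is why the monotonicity of $A$ alone, with no continuity or boundedness assumption beyond the given convergences, suffices.
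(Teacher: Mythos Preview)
Your proof is correct and essentially identical to the paper's own argument: both expand the monotonicity inequality $\dual{A(y_n)-A(y)}{y_n-y}\ge 0$ into $\dual{A(y_n)}{y_n}\ge \dual{A(y_n)}{y}+\dual{A(y)}{y_n}-\dual{A(y)}{y}$ and then pass to the limit using the two given weak convergences. There is nothing to add.
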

\begin{proof}
	From the monotonicity of $A$ we find
	\begin{align*}
		\dual{A(y_n)}{y_n}
		\ge
		\dual{A(y_n)}{y  }
		+
		\dual{A(y  )}{y_n}
		-
		\dual{A(y  )}{y  }
		.
	\end{align*}
	The right-hand side
	converges towards $\dual{A(y)}{y}$
	due to
	the weak convergences $y_n \weakly y$ and $A(y_n) \weakly A(y)$.
	This implies the claim.
\end{proof}
In the case that $A$ is additionally bounded and linear,
the above claim can be obtained from the observation that
$y \mapsto \dual{A(y)}{y}$ is convex.
This convexity does not hold in the nonlinear setting:
consider $A \colon \R \to \R$, $y \mapsto \max(-1,\min(1,y))$.

In order to obtain unique solvability of \eqref{eq:qvi_intro}
via contraction-type arguments,
one typically requires an inequality like
\begin{equation}
	\label{eq:lipschitz_projection}
	\norm{
		\Proj_{Q(x)}(z) - \Proj_{Q(y)}(z)
	}_V
	\le
	L_Q \, \norm{x - y}_V
	\qquad\forall x,y,z \in V
\end{equation}
for some $L_Q \ge 0$,
see, e.g., \cite[Theorem~4.1]{NesterovScrimali2011}.
Note that this inequality is not related
to the Lipschitz continuity of the projection
since the arguments of the projections in \eqref{eq:lipschitz_projection}
coincide.
By means of an example,
we show that \eqref{eq:lipschitz_projection} does not hold
for obstacle-type problems
if $Q$ is not of the moving-set type.
We consider the setting
$\Omega = (0,1)$,
$V = H_0^1(\Omega)$
$f = 1$, $A = -\Delta$
and
\begin{equation*}
	K(h)
	:=
	\set{
		v \in V
		\given
		v \le h
	}
\end{equation*}
for $\R \ni h \ge 0$.
It is easy to check that
the projection of $A^{-1} f$
onto the set $K(h)$
is given by
\begin{equation*}
	y_h(x)
	=
	\begin{cases}
		t_h \, x - \frac12 \, x^2 & \text{for } x \le t_h \\
		h & \text{for } t_h < x < 1-t_h \\
		-t_h \, x - \frac12 \, x^2 & \text{for } x \ge 1-t_h
	\end{cases}
\end{equation*}
with $t_h := \sqrt{2\,h}$
for all $h \in [0,1/8]$.
Here, we used the norm $\norm{v}_{H_0^1}^2 = \int_\Omega \abs{\nabla v}^2\,\dx$.
Then, $\norm{y_h}_{H_0^1(\Omega)} = C \, h^{3/4}$
for some $C > 0$.
Since $y_0 = 0$,
the mapping $h \mapsto y_h$ is not Lipschitz at $h = 0$.
By choosing a suitable $\Psi \colon V \to \R$
it can be checked that $Q = K \circ \Psi$
violates \eqref{eq:lipschitz_projection}.

\section{Moving-set QVIs as VIs}
\label{sec:QVI_as_VI}
In this section,
we utilize
the moving-set structure of $Q(y)$
to recast the QVI \eqref{eq:qvi_intro}
as an equivalent variational inequality (VI).
This is a classical approach,
see also \cite[Section~2]{AlphonseHintermuellerRautenberg2019},
\cite[Section~5.1]{AlphonseHintermuellerRautenberg2018}.
We start by
defining the new solution variable
$z := y - \Phi(y) \in K$.
In order to not lose any information,
we require that the function $I - \Phi\colon V \to V$
is a bijection.
Hence, $y = (I - \Phi)^{-1}(z)$.
Now,
it is easy to check that \eqref{eq:qvi_intro} is equivalent to
\begin{equation}
	\label{eq:VI}
	\text{Find } z \in K
	\quad\text{such that}\quad
	\dual[\big]{
		(A \circ (I-\Phi)^{-1}) (z)
		-
		f
	}{v - z}
	\ge
	0
	\qquad
	\forall v \in K
\end{equation}
and $y = (I - \Phi)^{-1}(z)$.
This means the following:
Under the assumption that $I-\Phi$ is a bijection,
$y$ is a solution of \eqref{eq:qvi_intro}
if and only if
$z = (I - \Phi)(y)$
is a solution of \eqref{eq:VI}.
In what follows,
we are going to use the VI \eqref{eq:VI}
in order to obtain information about the QVI \eqref{eq:qvi_intro}.
In the case in which both $A$ and $\Phi$
are linear,
such a strategy was suggested in
\cite[Remark~7]{AlphonseHintermuellerRautenberg2019}.
We shall see that this is also viable
in the fully nonlinear case.

In order to analyze \eqref{eq:VI}
we will frequently make use
of the following assumption.
\begin{assumption}
	\label{asm:coercivity}
	The operator $I - \Phi \colon V \to V$
	is a bijection
	and
	the operator
	$B := A \circ (I-\Phi)^{-1}$
	is
	strongly monotone and Lipschitz continuous.
\end{assumption}
Using the equivalence of \eqref{eq:qvi_intro} and \eqref{eq:VI}
as well as the existence result
\cite[Corollary~III.1.8]{KinderlehrerStampacchia1980}
for \eqref{eq:VI},
we obtain the following
existence and uniqueness result for \eqref{eq:qvi_intro}.
\begin{theorem}
	\label{thm:existence_uniqueness}
	Under \cref{asm:coercivity},
	the QVI \eqref{eq:qvi_intro}
	has a unique solution $y \in V$
	for any $f \in V\dualspace$.
	Moreover, the mapping $f \mapsto y$ is
	Lipschitz continuous.
\end{theorem}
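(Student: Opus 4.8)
The plan is to exploit the equivalence between the QVI \eqref{eq:qvi_intro} and the VI \eqref{eq:VI} that was established just before the statement, reducing everything to a standard result for variational inequalities. Under \cref{asm:coercivity}, the map $B = A \circ (I-\Phi)^{-1}$ is strongly monotone and Lipschitz continuous on the Hilbert space $V$, and $K$ is a non-empty, closed, convex set. This is exactly the setting of the classical existence and uniqueness theorem \cite[Corollary~III.1.8]{KinderlehrerStampacchia1980}, which guarantees a unique $z \in K$ solving
\begin{equation*}
	\dual{B(z) - f}{v - z} \ge 0 \qquad \forall v \in K.
\end{equation*}

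First I would apply that corollary to obtain the unique solution $z = z(f) \in K$ of \eqref{eq:VI}. Since \cref{asm:coercivity} includes the bijectivity of $I - \Phi$, the equivalence discussed above applies verbatim: setting $y := (I-\Phi)^{-1}(z)$ gives a solution of the QVI, and conversely every QVI solution arises this way via $z = (I-\Phi)(y)$. Because the correspondence $y \leftrightarrow z$ is a bijection (being the restriction of the bijection $I-\Phi$ and its inverse), uniqueness of $z$ transfers immediately to uniqueness of $y$, and existence of $z$ yields existence of $y$. This disposes of the first assertion with essentially no additional work beyond invoking the cited corollary.

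For the Lipschitz dependence $f \mapsto y$, I would proceed in two stages. The map $f \mapsto z(f)$ is Lipschitz: testing the VI for $z_1 = z(f_1)$ with $v = z_2$ and the VI for $z_2 = z(f_2)$ with $v = z_1$, then adding, yields
\begin{equation*}
	\dual{B(z_1) - B(z_2)}{z_1 - z_2} \le \dual{f_1 - f_2}{z_1 - z_2}.
\end{equation*}
Using strong monotonicity with modulus $\mu_B > 0$ on the left and Cauchy--Schwarz on the right gives $\norm{z_1 - z_2}_V \le \mu_B^{-1} \norm{f_1 - f_2}_{V\dualspace}$. The second stage is to compose with $(I-\Phi)^{-1}$, so $y = (I-\Phi)^{-1}(z)$. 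Here the main obstacle is that \cref{asm:coercivity} only asserts $I-\Phi$ is a bijection, not that its inverse is Lipschitz a priori. However, the Lipschitz continuity of $B = A \circ (I-\Phi)^{-1}$ together with properties of $A$ can be leveraged: more directly, strong monotonicity of $B$ forces $(I-\Phi)^{-1}$ to be Lipschitz on the relevant range, so that $y = (I-\Phi)^{-1}(z)$ depends Lipschitz-continuously on $z$.

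The delicate point I expect to need care is precisely establishing the Lipschitz continuity of $(I-\Phi)^{-1}$ from the hypotheses in \cref{asm:coercivity}, since this is what bridges the Lipschitz estimate for $z$ to the desired estimate for $y$. Once that Lipschitz constant $L_{(I-\Phi)^{-1}}$ is in hand, the final bound follows by composition,
\begin{equation*}
	\norm{y_1 - y_2}_V \le L_{(I-\Phi)^{-1}} \, \norm{z_1 - z_2}_V \le \frac{L_{(I-\Phi)^{-1}}}{\mu_B} \, \norm{f_1 - f_2}_{V\dualspace},
\end{equation*}
completing the proof.
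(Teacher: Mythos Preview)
Your route coincides with the paper's: the entire argument there is the sentence preceding the theorem, which invokes the equivalence of \eqref{eq:qvi_intro} and \eqref{eq:VI} together with \cite[Corollary~III.1.8]{KinderlehrerStampacchia1980}. For existence and uniqueness your write-up is complete and matches this exactly.

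For the Lipschitz dependence you correctly isolate the only non-obvious step, namely that \cref{asm:coercivity} does not postulate Lipschitz continuity of $(I-\Phi)^{-1}$. Your proposed resolution, however, is incorrect: neither strong monotonicity nor Lipschitz continuity of $B = A \circ (I-\Phi)^{-1}$ forces $(I-\Phi)^{-1}$ to be Lipschitz. On $V = \R$, take $(I-\Phi)(y)$ to be the real cube root of $y$ (so $(I-\Phi)^{-1}(z) = z^3$) and let $A$ be the real cube root as well; then $B$ is the identity, hence $1$-strongly monotone and $1$-Lipschitz, yet $(I-\Phi)^{-1}$ is not Lipschitz, and with $K = V$ the QVI solution is $y = f^3$, which fails to depend Lipschitz-continuously on $f$. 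Thus \cref{asm:coercivity} taken literally is not quite enough for the Lipschitz assertion, and your hand-wave does not close the gap. The paper does not address this point explicitly either; in every concrete instance it actually uses (\cref{lem:coercivity_A_I_Phi,lem:coercivity_A_I_Phi_potential} and the later sections), one has $L_\Phi < 1$, which yields the Lipschitz bound $(1-L_\Phi)^{-1}$ for $(I-\Phi)^{-1}$ directly, and that is the estimate really being relied upon downstream. If you want a clean statement, either add the hypothesis that $(I-\Phi)^{-1}$ is Lipschitz to \cref{asm:coercivity}, or assume in addition that $A$ is strongly monotone, since then $(I-\Phi)^{-1} = A^{-1} \circ B$ inherits Lipschitz continuity from that of $A^{-1}$ and $B$.
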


In the remainder of this section,
we give some conditions
implying \cref{asm:coercivity}.
\begin{lemma}
	\label{lem:coercivity_A_I_Phi}
	We assume that $A$ is $\mu_A$-strongly monotone
	and $L_A$-Lipschitz
	and that $\Phi$ is $L_\Phi$-Lipschitz.
	We further assume that
	\begin{equation}
		\label{eq:general_L}
		L_\Phi
		<
		\frac1{\gamma_A},
	\end{equation}
	where $\gamma_A = L_A / \mu_A$.
	Then, the operator $B := A \circ (I - \Phi)^{-1}$
	is $\mu_B$-strongly monotone
	and
	$L_B$-Lipschitz
	with
	\begin{equation*}
		\mu_B =
		\frac{\mu_a - L_A \, L_\Phi}{(1 + L_\Phi)^2}
		\qquad\text{and}\qquad
		L_B = \frac{L_A}{1 - L_\Phi}
		.
	\end{equation*}
	In particular,
	\cref{asm:coercivity}
	is satisfied and \eqref{eq:qvi_intro}
	has a unique solution for every $f \in V\dualspace$.
\end{lemma}
\begin{proof}
	First we remark that we have
	$L_\Phi < \gamma_A^{-1} \le 1$.
	Thus,
	Banach's fixed point theorem implies
	that $I-\Phi$ is a bijection.
	We claim that $(1-L_\Phi)^{-1}$
	is a Lipschitz constant of $(I-\Phi)^{-1}$.
	Indeed,
	let $y_1, y_2 \in V$ be arbitrary.
	We define $x_i := (I-\Phi)^{-1}(y_i)$ for $i = 1,2$.
	Then,
	$x_i - y_i = \Phi(x_i)$, $i = 1,2$
	and this yields
	\begin{equation*}
		\norm{x_1 - x_2}_V - \norm{y_1 - y_2}_V
		\le
		\norm{ (x_1 - y_1) - (x_2 - y_2) }_V
		\le
		L_\Phi \, \norm{x_1 - x_2}_V
		.
	\end{equation*}
	This shows the claim concerning a Lipschitz constant
	of $(I-\Phi)^{-1}$.
	Moreover, this directly shows
	that $L_B$ is a Lipschitz constant of $B$.

	For arbitrary $y_1, y_2 \in V$ we again use
	$x_i := (I-\Phi)^{-1}(y_i)$ for $i = 1,2$.
	Then,
	\begin{align*}
		\dual[\big]{B(y_1) - B(y_2)}{y_1 - y_2}
		&=
		\dual[\big]{A (x_1) - A (x_2)}{(I-\Phi)(x_1) - (I-\Phi)(x_2)}
		\\
		&\ge
		(\mu_A - L_A \, L_\Phi) \, \norm{x_1 - x_2}_V^2
		.
	\end{align*}
	The estimate
	\begin{equation*}
		\norm{ y_1 - y_2 }_V
		=
		\norm[\big]{ x_1 - x_2 - \parens[\big]{\Phi(x_1) - \Phi(x_2)} }_V
		\le
		(1 + L_\Phi) \, \norm{x_1 - x_2}_V
	\end{equation*}
	yields the assertion
	concerning the strong monotonicity of $B$.
	The final claim follows from \cref{thm:existence_uniqueness}.
\end{proof}
We recall that the condition \eqref{eq:general_L}
was used in \cite[Corollary~2]{NesterovScrimali2011}
to obtain existence and uniqueness
for solutions of \eqref{eq:VI}.
The above analysis shows,
that this condition even implies \cref{asm:coercivity}.

Next, we show that the estimate \eqref{eq:general_L}
can be significantly relaxed
if $A$ is the derivative of a convex function.
To this end,
we need to recall
an important
inequality
for convex functions.
This inequality is well-known
in the finite-dimensional case,
see, e.g.,
\cite[Theorem~2.1.12]{Nesterov2004}
or
\cite[Lemma~3.10]{Bubeck2015},
and the proof carries over to arbitrary Hilbert spaces.
We are, however,
not aware of a reference in the infinite-dimensional case.
\begin{lemma}
	\label{lem:convex_inequality}
	Let $g\colon V \to \R$
	be a Fréchet differentiable convex function
	such that the derivative $g' \colon V \to V\dualspace$
	is $\mu_g$-strongly monotone
	and $L_g$-Lipschitz.
	Then,
	\begin{equation*}
		\dual{g'(x_1) - g'(x_2)}{x_1 - x_2}
		\ge
		\frac{\mu_g \, L_g}{\mu_g + L_g} \, \norm{x_1 - x_2}_V^2
		+
		\frac{1}{\mu_g + L_g} \, \norm{g'(x_1) - g'(x_2)}_{V\dualspace}^2
	\end{equation*}
	holds for all $x_1, x_2 \in V$.
\end{lemma}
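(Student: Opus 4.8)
The plan is to reduce the claim to the classical co-coercivity (Baillon--Haddad) inequality for convex functions with Lipschitz derivative, which is the only nontrivial ingredient and whose proof is purely variational, hence transfers verbatim to the Hilbert space setting. Throughout, let $J\colon V \to V\dualspace$ denote the Riesz isomorphism, so that $\dual{J x}{v} = (x,v)_V$ and $\norm{J x}_{V\dualspace} = \norm{x}_V$.

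First I would record the \emph{descent lemma}: for any Fréchet differentiable $h \colon V \to \R$ whose derivative is $\ell$-Lipschitz, the fundamental theorem of calculus $h(y) - h(x) = \int_0^1 \dual{h'(x + t(y-x))}{y-x}\,\mathrm{d}t$ together with the Lipschitz estimate of the integrand gives
\[
	h(y) \le h(x) + \dual{h'(x)}{y-x} + \frac{\ell}{2}\,\norm{y-x}_V^2 \qquad \forall x,y \in V.
\]
Next, assuming in addition that $h$ is convex, I would prove co-coercivity. Fixing $x_1,x_2 \in V$ and setting $\psi(x) := h(x) - \dual{h'(x_2)}{x}$, the function $\psi$ is convex, has $\ell$-Lipschitz derivative, and satisfies $\psi'(x_2) = 0$, so $x_2$ is a global minimizer of $\psi$. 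Applying the descent lemma to $\psi$ at the point $x_1 - \ell^{-1} J^{-1}\psi'(x_1)$ yields
\[
	\psi(x_2) \le \psi\paren[\big]{x_1 - \tfrac1\ell\,J^{-1}\psi'(x_1)} \le \psi(x_1) - \frac{1}{2\ell}\,\norm{\psi'(x_1)}_{V\dualspace}^2,
\]
i.e. $h(x_1) - h(x_2) - \dual{h'(x_2)}{x_1-x_2} \ge \frac{1}{2\ell}\norm{h'(x_1)-h'(x_2)}_{V\dualspace}^2$; symmetrizing in $x_1,x_2$ and adding gives the co-coercivity estimate $\dual{h'(x_1)-h'(x_2)}{x_1-x_2} \ge \frac1\ell \norm{h'(x_1)-h'(x_2)}_{V\dualspace}^2$.

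The decisive reduction step is then to apply this to the shifted function $\phi := g - \frac{\mu_g}{2}\norm{\cdot}_V^2$, so that $\phi'(x) = g'(x) - \mu_g\, J x$. Strong monotonicity of $g'$ makes $\phi'$ monotone, hence $\phi$ convex, and subtracting the exact quadratic expansion of $\frac{\mu_g}{2}\norm{\cdot}_V^2$ from the descent lemma for $g$ shows that $\phi$ obeys the descent lemma with constant $\ell = L_g - \mu_g$. Feeding $\phi$ into the co-coercivity estimate, substituting $\phi'(x_1)-\phi'(x_2) = (g'(x_1)-g'(x_2)) - \mu_g\, J(x_1-x_2)$, and using the identities $\dual{J(x_1-x_2)}{x_1-x_2} = \norm{x_1-x_2}_V^2$ and $(g'(x_1)-g'(x_2),\, J(x_1-x_2))_{V\dualspace} = \dual{g'(x_1)-g'(x_2)}{x_1-x_2}$, the inequality becomes, after clearing the denominator $L_g - \mu_g$ and collecting terms, exactly
\[
	(\mu_g + L_g)\,\dual{g'(x_1)-g'(x_2)}{x_1-x_2} \ge \mu_g\, L_g\,\norm{x_1-x_2}_V^2 + \norm{g'(x_1)-g'(x_2)}_{V\dualspace}^2,
\]
which is the assertion after dividing by $\mu_g + L_g$.

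The main obstacle is bookkeeping rather than conceptual: one must carry the Riesz isomorphism $J$ consistently through the passage between $V$ and $V\dualspace$ (e.g. in forming $x_1 - \ell^{-1}J^{-1}\psi'(x_1)$ and in the dual inner-product identity), which is automatic in the Euclidean references but must be made explicit here. The one genuine case distinction is the degenerate situation $L_g = \mu_g$, where the co-coercivity constant $\ell$ vanishes; there $g'$ is forced to be affine, since the Cauchy--Schwarz estimate becomes tight in the chain $\mu_g\norm{x_1-x_2}_V^2 \le \dual{g'(x_1)-g'(x_2)}{x_1-x_2} \le \norm{g'(x_1)-g'(x_2)}_{V\dualspace}\norm{x_1-x_2}_V \le L_g\norm{x_1-x_2}_V^2$, and the claimed inequality then holds with equality by a direct computation.
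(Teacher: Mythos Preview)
Your argument is correct and follows essentially the same route the paper points to: the proofs in Nesterov and Bubeck proceed exactly by applying co-coercivity (Baillon--Haddad) to the shifted function $\phi = g - \tfrac{\mu_g}{2}\norm{\cdot}_V^2$ and rearranging, and the paper's only addition is to observe that the co-coercivity step is available in Hilbert spaces via \cite[Theorem~18.15]{BauschkeCombettes2011}. You have simply written that step out by hand (descent lemma plus the minimization trick for $\psi$), tracked the Riesz map explicitly, and dealt with the border case $L_g = \mu_g$ separately; the paper absorbs all of this into the citation.
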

\begin{proof}
	One can transfer the proofs of
	\cite[Theorem~2.1.12]{Nesterov2004}
	or
	\cite[Lemma~3.10]{Bubeck2015}
	to the infinite-dimensional case by using
	\cite[Theorem~18.15]{BauschkeCombettes2011}.
\end{proof}
\begin{lemma}
	\label{lem:coercivity_A_I_Phi_potential}
	We assume that $A$ is $\mu_A$-strongly monotone
	and $L_A$-Lipschitz
	and that $\Phi$ is $L_\Phi$-Lipschitz.
	We further assume that
	there exists
	a Fréchet differentiable convex function
	$g\colon V \to \R$
	such that $A = g'$
	and
	\begin{equation}
		\label{eq:convex_L}
		L_\Phi
		<
		\frac{2 \, \sqrt{\gamma_A}}{1 + \gamma_A}
		=
		\frac{2 \, \sqrt{\mu_A \, L_A}}{\mu_A + L_A}
	\end{equation}
	where $\gamma_A = L_A / \mu_A$.
	Then, the operator $B := A \, (I - \Phi)^{-1}$
	is $\mu_B$-strongly monotone
	and
	$L_B$-Lipschitz
	with
	\begin{equation*}
		\mu_B =
		\frac{4 \, \mu_a \, L_A - L_\Phi^2 \, (\mu_A + L_A)^2}{4 \, (\mu_A + L_A) \, (1+L_\Phi)^2}
		\qquad\text{and}\qquad
		L_B = \frac{L_A}{1 - L_\Phi}
		.
	\end{equation*}
	In particular,
	\cref{asm:coercivity}
	is satisfied and \eqref{eq:qvi_intro}
	has a unique solution for every $f \in V\dualspace$.
\end{lemma}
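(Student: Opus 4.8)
The plan is to follow the same strategy as in the proof of \cref{lem:coercivity_A_I_Phi}, replacing the plain strong-monotonicity estimate for $A$ by the sharper coercivity inequality available for potential operators, namely \cref{lem:convex_inequality}. First I would note that \eqref{eq:convex_L} already forces $L_\Phi < 1$, since $2\,\sqrt{\mu_A \, L_A} \le \mu_A + L_A$ by the arithmetic--geometric mean inequality. Hence $I - \Phi$ is a bijection by Banach's fixed point theorem, and, exactly as in \cref{lem:coercivity_A_I_Phi}, the map $(I-\Phi)^{-1}$ is Lipschitz with constant $(1 - L_\Phi)^{-1}$. This already gives the claimed Lipschitz constant $L_B = L_A / (1 - L_\Phi)$ of $B$ and leaves only the strong monotonicity to be established.

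For the monotonicity estimate I would fix $y_1, y_2 \in V$, set $x_i := (I-\Phi)^{-1}(y_i)$, and abbreviate $d := x_1 - x_2$ and $a := A(x_1) - A(x_2)$. As in the previous lemma, $(I-\Phi)(x_1) - (I-\Phi)(x_2) = d - \bigl(\Phi(x_1) - \Phi(x_2)\bigr)$, so that
\[
	\dual{B(y_1) - B(y_2)}{y_1 - y_2}
	=
	\dual{a}{d} - \dual{a}{\Phi(x_1) - \Phi(x_2)}
	.
\]
To the first term I would apply \cref{lem:convex_inequality} with $g' = A$, $\mu_g = \mu_A$, $L_g = L_A$, which yields $\dual{a}{d} \ge \tfrac{\mu_A \, L_A}{\mu_A + L_A}\,\norm{d}_V^2 + \tfrac{1}{\mu_A + L_A}\,\norm{a}_{V\dualspace}^2$. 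The second term I would bound by Cauchy--Schwarz and the Lipschitz property of $\Phi$, giving $\dual{a}{\Phi(x_1) - \Phi(x_2)} \le L_\Phi \, \norm{a}_{V\dualspace}\,\norm{d}_V$.

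Combining these, the resulting lower bound for $\dual{B(y_1)-B(y_2)}{y_1-y_2}$ is a quadratic in $\norm{a}_{V\dualspace}$ with positive leading coefficient $\tfrac{1}{\mu_A+L_A}$. The crucial step --- and the only place where the potential structure enters --- is to minimize this quadratic in $\norm{a}_{V\dualspace}$, equivalently to complete the square so as to absorb the cross term $-L_\Phi\,\norm{a}_{V\dualspace}\,\norm{d}_V$ into the two quadratic terms. The minimal value equals $\bigl(\tfrac{\mu_A \, L_A}{\mu_A + L_A} - \tfrac{L_\Phi^2\,(\mu_A+L_A)}{4}\bigr)\,\norm{d}_V^2$, whose coefficient is exactly $\tfrac{4\,\mu_A \, L_A - L_\Phi^2\,(\mu_A+L_A)^2}{4\,(\mu_A+L_A)}$ and is strictly positive precisely under the assumption \eqref{eq:convex_L}. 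Finally, converting $\norm{d}_V$ back to $\norm{y_1 - y_2}_V$ via $\norm{y_1 - y_2}_V \le (1 + L_\Phi)\,\norm{d}_V$ produces the stated constant $\mu_B$, and the concluding claim follows from \cref{thm:existence_uniqueness}.

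I expect the main obstacle to be nothing more than recognizing that the additional term $\tfrac{1}{\mu_A + L_A}\,\norm{a}_{V\dualspace}^2$ supplied by \cref{lem:convex_inequality} is exactly what is needed to dominate the cross term, and then verifying that the positivity threshold produced by the square completion coincides with the right-hand side of \eqref{eq:convex_L}; the remaining manipulations are the routine bookkeeping already carried out for \cref{lem:coercivity_A_I_Phi}.
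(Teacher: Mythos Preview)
Your proposal is correct and follows essentially the same route as the paper: reduce to the setup of \cref{lem:coercivity_A_I_Phi} for the invertibility of $I-\Phi$ and the value of $L_B$, split $\dual{B(y_1)-B(y_2)}{y_1-y_2}$ into $\dual{a}{d}$ and the $\Phi$-term, apply \cref{lem:convex_inequality} to the first and Cauchy--Schwarz/Lipschitz to the second, and then absorb the cross term (the paper phrases this as Young's inequality, you as completing the square --- these are the same computation) before converting back via $\norm{y_1-y_2}_V \le (1+L_\Phi)\,\norm{d}_V$.
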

\begin{proof}
	By arguing as in the proof of \cref{lem:coercivity_A_I_Phi},
	we obtain that $I-\Phi$ is invertible
	and the value of the Lipschitz constant $L_B$ follows.

	Now, let $y_1, y_2 \in V$ be arbitrary
	and we set
	$x_i := (I-\Phi)^{-1}(y_i)$, $i =1,2$.
	Then,
	we apply \cref{lem:convex_inequality}
	to obtain
	\begin{align*}
		\dual[\big]{B(y_1) - B(y_2)}{y_1 - y_2}
		&=
		\dual[\big]{A (x_1) - A (x_2)}{(I-\Phi)(x_1) - (I-\Phi)(x_2)}
		\\
		&\ge
		\frac{\mu_A \, L_A}{\mu_A + L_A} \, \norm{x_1 - x_2}_V^2
		+
		\frac1{\mu_A + L_A} \, \norm{A(x_1) - A(x_2)}_{V\dualspace}^2
		\\&\qquad
		-
		L_\Phi \, \norm{x_1 - x_2}_V \, \norm{A(x_1) - A(x_2)}_{V\dualspace}
		.
	\end{align*}
	Next, we employ Young's inequality
	\begin{align*}
		L_\Phi \, \norm{x_1 - x_2}_V \, \norm{A(x_1) - A(x_2)}_{V\dualspace}
		&\le
		\frac{L_\Phi^2 \, (\mu_A + L_A)}{4} \, \norm{x_1 - x_2}_V^2
		\\&\qquad+
		\frac1{\mu_A + L_A} \, \norm{A(x_1) - A(x_2)}_{V\dualspace}^2
	\end{align*}
	and get
	\begin{align*}
		\dual[\big]{B(y_1) - B(y_2)}{y_1 - y_2}
		&\ge
		\paren[\Big]{\frac{\mu_A \, L_A}{\mu_A + L_A} - \frac{L_\Phi^2 \, (\mu_A + L_A)}{4}} \, \norm{x_1 - x_2}_V^2
		\\
		&=
		\frac{4 \, \mu_A \, L_A - L_\Phi^2 \, (\mu_A + L_A)^2}{4 \, (\mu_A + L_A)} \, \norm{x_1 - x_2}_V^2
		.
	\end{align*}
	From the proof of \cref{lem:coercivity_A_I_Phi}
	we find
	$\norm{ y_1 - y_2 }_V \le (1 + L_\Phi) \, \norm{x_1 - x_2}_V$
	and this yields the monotonicity.
	The final claim follows from \cref{thm:existence_uniqueness}.
\end{proof}
Note that
the inequality
\eqref{eq:convex_L}
is weaker than
\eqref{eq:general_L},
unless $\gamma_A = 1$.
\cref{lem:coercivity_A_I_Phi_potential}
is an improvement
of the corresponding results in the literature,
e.g.,
\cite[Corollary~2]{NesterovScrimali2011},
in the case that $A$ is the derivative of a convex function.
It is well known
that $A$ is a derivative of a convex function
if and only if $A$ is \emph{maximally cyclically monotone},
see, e.g., \cite[Theorem~22.14]{BauschkeCombettes2011}.

Finally, we demonstrate by the mean of two examples
that the assumptions \eqref{eq:general_L} and \eqref{eq:convex_L}
are sharp, even in the case of linear operators.
These examples are found by constructing
operators for which the estimates in the proofs of
\cref{lem:coercivity_A_I_Phi,lem:coercivity_A_I_Phi_potential}
are sharp.
First, we validate the sharpness of \eqref{eq:convex_L}.
\begin{theorem}
	\label{thm:sharp_32}
	Let the constants
	$0 < \mu_A < L_A$
	be given.
	We define
	$L_\Phi := \mu_A / L_A < 1$,
	i.e., \eqref{eq:general_L} is violated.
	Then, there exist linear operators $A$ and $\Phi$ on $V = \R^2$
	(equipped with the Euclidean inner product),
	such that
	$A$
	is $\mu_A$-strongly monotone and $L_A$-Lipschitz,
	$\Phi$ is $L_\Phi$-Lipschitz
	and
	$A \, (I-\Phi)^{-1}$
	is not coercive.
	Moreover,
	there exists a one-dimensional subspace $K \subset \R^2$
	such that \eqref{eq:VI} and \eqref{eq:qvi_intro}
	are not uniquely solvable for all $f \in V\dualspace$.
\end{theorem}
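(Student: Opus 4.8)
The plan is to make every inequality in the proof of \cref{lem:coercivity_A_I_Phi} tight at once. For linear operators the monotonicity of $B := A\,(I-\Phi)^{-1}$ is governed by
$\dual{A x}{(I-\Phi)x} = \dual{Ax}{x} - \dual{Ax}{\Phi x}$,
and the proof bounds this from below using strong monotonicity $\dual{Ax}{x} \ge \mu_A \, \abs{x}^2$ together with $\dual{Ax}{\Phi x} \le \abs{Ax}\,\abs{\Phi x} \le L_A \, L_\Phi \, \abs{x}^2$. With $L_\Phi = \mu_A/L_A$ the resulting constant $\mu_A - L_A \, L_\Phi$ is exactly $0$, so the task reduces to exhibiting operators for which $\dual{Ax}{x} = \mu_A \, \abs{x}^2$ and $\dual{Ax}{\Phi x} = L_A \, L_\Phi \, \abs{x}^2$ hold with equality in a common direction.

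First I would choose $A$ to be the rotation--scaling matrix $A := \begin{pmatrix} \mu_A & -\beta \\ \beta & \mu_A \end{pmatrix}$ with $\beta := \sqrt{L_A^2 - \mu_A^2} > 0$. The decisive feature of this (non-symmetric) choice is that its symmetric part equals $\mu_A \, I$ and that $A^\top A = L_A^2 \, I$, so $\dual{Ax}{x} = \mu_A \, \abs{x}^2$ and $\abs{Ax} = L_A \, \abs{x}$ hold in \emph{every} direction; in particular $A$ is $\mu_A$-strongly monotone and $L_A$-Lipschitz. I would then set $\Phi := (\mu_A/L_A^2)\,A$, a positive multiple of $A$, so that $\Phi x$ is parallel to $Ax$ with $\abs{\Phi x} = (\mu_A/L_A)\,\abs{x} = L_\Phi \, \abs{x}$; this gives $\norm{\Phi} = L_\Phi$ and, crucially, $\dual{Ax}{\Phi x} = \mu_A \, \abs{x}^2 = L_A \, L_\Phi \, \abs{x}^2$, whence $\dual{Ax}{(I-\Phi)x} = 0$ for all $x$.

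A direct computation then shows that $I - \Phi = (\beta/L_A^2)\begin{pmatrix} \beta & \mu_A \\ -\mu_A & \beta \end{pmatrix}$ is invertible and that $B = A\,(I-\Phi)^{-1} = (L_A^2/\beta)\begin{pmatrix} 0 & -1 \\ 1 & 0 \end{pmatrix}$ is a scaled quarter-turn, i.e.\ skew-symmetric; hence $\dual{Bz}{z} = 0$ for all $z$ and $B$ fails to be coercive, which is the first assertion. For the non-uniqueness I would take $K$ to be any one-dimensional subspace, say $K := \R \, e_1$. Since $B$ is skew-symmetric, $Bz \perp K$ for every $z \in K$, so testing \eqref{eq:VI} with the subspace $K$ forces $\dual{Bz - f}{w} = -\dual{f}{w} = 0$ for all $w \in K$, a condition on $f$ alone and independent of $z$: if $f \perp K$ then every $z \in K$ solves \eqref{eq:VI}, whereas if $f \not\perp K$ then \eqref{eq:VI} has no solution. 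In either case \eqref{eq:VI} is not uniquely solvable, and because $I - \Phi$ is a bijection the equivalence established in \cref{sec:QVI_as_VI} transfers this to \eqref{eq:qvi_intro}.

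The only genuinely delicate point is the construction of $A$: one must arrange that the direction minimizing the Rayleigh quotient of the symmetric part coincides with a direction of maximal singular value, which for a general matrix they need not. The rotation--scaling matrix circumvents this by rendering every direction extremal for both quantities simultaneously, and taking $\Phi$ proportional to $A$ then aligns $\Phi x$ with $Ax$ so that Cauchy--Schwarz is tight as well. Together these force $B$ to be exactly skew-symmetric, and it is this skew-symmetry that drives both the loss of coercivity and the degeneracy of the variational inequality on a line.
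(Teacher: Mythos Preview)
Your proof is correct and shares the paper's choice of $A$ as the rotation--scaling matrix, but differs in the choice of $\Phi$: the paper takes the rank-one operator $\Phi = \tfrac{L_\Phi}{L_A}\, (A e_1)\, e_1^\top$, which makes $A(I-\Phi)^{-1}$ lose coercivity in exactly one direction $z = (I-\Phi)e_1$, and then sets $K = \operatorname{span}\{z\}$. Your choice $\Phi = (\mu_A/L_A^2)\,A$ is full-rank and commutes with $A$, forcing $B = A(I-\Phi)^{-1}$ to be skew-symmetric globally; this is a bit stronger than needed and has the pleasant side effect that \emph{any} one-dimensional subspace $K$ works, with no specific direction to track. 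The two constructions agree on the vector $e_1$ (indeed $\Phi e_1$ coincides in both), so your example can be read as a symmetric completion of the paper's rank-one perturbation. Either route proves the theorem; yours trades a slightly less minimal $\Phi$ for a cleaner verification.
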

\begin{proof}
	We define the constant $c_A := \sqrt{L_A^2 - \mu_A^2} > 0$
	and the operators
	\begin{equation*}
		A :=
		\begin{pmatrix}
			\mu_A & -c_A \\
			c_A & \mu_A
		\end{pmatrix}
		,
		\qquad
		\Phi :=
		\frac{L_\Phi}{L_A} \, y \, x^\top
	\end{equation*}
	where
	\begin{equation*}
		x =
		\begin{pmatrix}
			1 \\ 0
		\end{pmatrix}
		,
		\qquad
		y =
		\begin{pmatrix}
			\mu_A \\ c_A
		\end{pmatrix}
		=
		A \, x
		.
	\end{equation*}
	Since $A$ is the combination of a rotation and a scaling by $L_A$,
	it is easy to check that
	$z^\top A \, z = \mu_A \, \norm{z}^2$ and $\norm{A \, z} = L_A \, \norm{z}$
	hold for all $z \in \R^2$.
	Moreover, the Lipschitz constant of $\Phi$ is $L_\Phi$.
	However,
	\begin{equation*}
		z^\top A \, (I - \Phi)^{-1} \, z = 0,
		\qquad\text{where}\qquad
		z = (I-\Phi) \, x \ne 0
		.
	\end{equation*}
	Hence,
	$A \, (I-\Phi)^{-1}$
	is not coercive.
	Moreover,
	if we set $K = \Span\set{z}$
	it is clear that \eqref{eq:VI}
	is not uniquely solvable for all $f \in V\dualspace = \R^2$.
	Since $I - \Phi$ is a bijection,
	this implies that \eqref{eq:qvi_intro}
	is not uniquely solvable for all $f \in V\dualspace = \R^2$.
\end{proof}
The next result shows that \eqref{eq:convex_L} is sharp.
\begin{theorem}
	\label{thm:sharpness}
	Let
	$0 < \mu_A < L_A$
	be given.
	We define
	$L_\Phi := 2 \, \sqrt{\mu_A \, L_A} / (\mu_A + L_A) < 1$,
	i.e., \eqref{eq:convex_L} is violated.
	Then, there exists a linear symmetric operator $A$ on $V = \R^2$
	(equipped with the Euclidean inner product)
	and a linear operator $\Phi$ in $\R^2$,
	such that
	$A$
	is $\mu_A$-strongly monotone and $L_A$-Lipschitz,
	$\Phi$ is $L_\Phi$-Lipschitz
	and
	$A \, (I-\Phi)^{-1}$
	is not coercive.
	Moreover,
	there exists a one-dimensional subspace $K \subset \R^2$
	such that \eqref{eq:VI} and \eqref{eq:qvi_intro}
	cannot be uniquely solvable for all $f \in V\dualspace$.
\end{theorem}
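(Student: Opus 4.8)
The plan is to mirror the explicit construction in the proof of \cref{thm:sharp_32}, but now reverse-engineered from the tightness conditions of the two estimates used in the proof of \cref{lem:coercivity_A_I_Phi_potential}, namely the convex inequality of \cref{lem:convex_inequality} and the subsequent Young inequality. The key observation is that, for a symmetric $A$ on $\R^2$ whose two eigenvalues are \emph{exactly} $\mu_A$ and $L_A$, the inequality of \cref{lem:convex_inequality} holds with equality for \emph{every} direction: writing $d = \alpha\,e_1 + \beta\,e_2$ in an eigenbasis with $e_1,e_2$ eigenvectors for $\mu_A,L_A$, both sides evaluate to $\mu_A\,\alpha^2 + L_A\,\beta^2$. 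Hence only Young's inequality remains to be made sharp, and equality there forces $\norm{A\,d}_V = \sqrt{\mu_A\,L_A}\,\norm{d}_V$, which singles out the direction $d \propto \sqrt{L_A}\,e_1 + \sqrt{\mu_A}\,e_2$.

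Guided by this, I would take $A := \begin{pmatrix}\mu_A & 0 \\ 0 & L_A\end{pmatrix}$, which is symmetric, $\mu_A$-strongly monotone and $L_A$-Lipschitz and equals $g'$ for the convex potential $g(v) = \frac12\,v^\top A\,v$. Let $x := (\mu_A + L_A)^{-1/2}\,(\sqrt{L_A},\sqrt{\mu_A})^\top$ be the unit vector in the critical direction, so that $\norm{A\,x}_V = \sqrt{\mu_A\,L_A}$ and $x^\top A\,x = 2\,\mu_A\,L_A/(\mu_A+L_A)$. I then set
\begin{equation*}
	\Phi := \frac{2}{\mu_A + L_A}\,(A\,x)\,x^\top,
\end{equation*}
a rank-one operator. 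First I would record that $\Phi$ is $L_\Phi$-Lipschitz: its operator norm equals $\frac{2}{\mu_A+L_A}\,\norm{A\,x}_V\,\norm{x}_V = \frac{2\,\sqrt{\mu_A\,L_A}}{\mu_A+L_A} = L_\Phi$. Since $L_\Phi < 1$, a Neumann series shows that $I-\Phi$ is a bijection, so $B := A\,(I-\Phi)^{-1}$ is well defined.

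Next I would establish non-coercivity of $B$ exactly as in \cref{thm:sharp_32}. Put $z := (I-\Phi)\,x$, which is nonzero since $\mu_A \ne L_A$ (a direct computation gives $z = \frac{L_A - \mu_A}{(\mu_A+L_A)^{3/2}}\,(\sqrt{L_A},-\sqrt{\mu_A})^\top$). Then $(I-\Phi)^{-1}z = x$, hence $B\,z = A\,x$, and using $\Phi\,x = \frac{2}{\mu_A+L_A}\,(A\,x)\,(x^\top x) = \frac{2}{\mu_A+L_A}\,A\,x$ I obtain
\begin{equation*}
	z^\top B\,z = \bigl((I-\Phi)\,x\bigr)^\top A\,x = x^\top A\,x - \frac{2}{\mu_A+L_A}\,\norm{A\,x}_V^2 = \frac{2\,\mu_A\,L_A}{\mu_A+L_A} - \frac{2\,\mu_A\,L_A}{\mu_A+L_A} = 0.
\end{equation*}
Thus $B$ is not coercive. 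Finally, setting $K := \Span\set{z}$ turns \eqref{eq:VI} into the scalar requirement $z^\top(B\,w - f) = 0$ for $w \in K$; writing $w = t\,z$ this reads $t\,(z^\top B\,z) = z^\top f$, i.e.\ $0 = z^\top f$, which has no solution if $z^\top f \ne 0$ and infinitely many if $z^\top f = 0$. Hence \eqref{eq:VI} fails to be uniquely solvable for all $f \in V\dualspace$, and since $I-\Phi$ is a bijection the same conclusion transfers to \eqref{eq:qvi_intro} via $y = (I-\Phi)^{-1}z$.

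The only genuinely non-routine step is the first one: recognizing that the symmetric two-eigenvalue structure makes \cref{lem:convex_inequality} automatically tight, so that the critical direction is dictated solely by equality in Young's inequality. Once the unit vector $x$ and the rank-one $\Phi$ are identified, the remaining work consists of the short computations of the operator norm of $\Phi$ and of $z^\top B\,z$, together with the standard subspace argument for non-uniqueness.
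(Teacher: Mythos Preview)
Your construction and verification are correct and coincide with the paper's: the same diagonal $A$, the same unit vector $x$, and your rank-one $\Phi = \tfrac{2}{\mu_A+L_A}(Ax)x^\top$ is precisely the explicit matrix the paper writes down, after which both arguments show $z^\top A(I-\Phi)^{-1}z = 0$ for $z = (I-\Phi)x$ and take $K = \operatorname{span}\{z\}$. The motivation you supply---tracing the choice of $x$ back to simultaneous equality in \cref{lem:convex_inequality} and Young's inequality---is a welcome addition that the paper leaves implicit.
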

\begin{proof}
	We define
	\begin{equation*}
		A :=
		\begin{pmatrix}
			\mu_A & 0 \\
			0 & L_A
		\end{pmatrix}
		.
	\end{equation*}
	It is clear that the operator $A$
	is $\mu_A$-strongly monotone and $L_A$-Lipschitz.
	We further set
	\begin{equation*}
		x :=
		\begin{pmatrix}
			\sqrt{L_A / (\mu_A + L_A)} \\
			\sqrt{\mu_A / (\mu_A + L_A)}
		\end{pmatrix}
		,
		\qquad
		\Phi
		:=
		\frac{2}{(\mu_A + L_A)^2}
		\,
		\begin{pmatrix}
			\mu_A^2 \, L_A & \mu_A \, \sqrt{\mu_A \, L_A} \\
			L_A \, \sqrt{\mu_A \, L_A} & \mu_A^2 \, L_A
		\end{pmatrix}
		.
	\end{equation*}
	It can be checked that
	$\Phi$ is $L_\Phi$-Lipschitz
	and $\norm{x} = 1$.
	However,
	\begin{equation*}
		x^\top A \, (I - \Phi) \, x = 0
		\qquad\text{and}\qquad
		y^\top A \, (I - \Phi)^{-1} \, y = 0
	\end{equation*}
	where
	$y = (I-\Phi) \, x \ne 0$.
	Hence,
	$A \, (I-\Phi)$
	and
	$A \, (I-\Phi)^{-1}$
	are not coercive.
	Moreover,
	if we set $K = \Span\set{y}$
	it is clear that \eqref{eq:VI}
	cannot be uniquely solvable for all $f \in V\dualspace = \R^2$.
	Since $I - \Phi$ is a bijection,
	this implies that \eqref{eq:qvi_intro}
	is not uniquely solvable for all $f \in V\dualspace = \R^2$.
\end{proof}

We further mention that it is also possible
to obtain \cref{asm:coercivity}
in situations in which $\Phi$ is ``not small'',
e.g., if $\Phi = \lambda \, I$ with some $\lambda < 1$,
\cref{asm:coercivity}
follows automatically
if $A$ is strongly monotone and Lipschitz
since
$(I-\Phi)^{-1} = (1-\lambda)^{-1} \, I$
in this case.
Moreover,
it is possible to analyze
the situation in which $A$ is a small perturbation
of the derivative of a convex function
by combining the ideas of \cref{lem:coercivity_A_I_Phi,lem:coercivity_A_I_Phi_potential}.

The combination of \cref{thm:existence_uniqueness,lem:coercivity_A_I_Phi}
yields a well-known result:
under the assumption \eqref{eq:general_L},
the QVI \eqref{eq:qvi_intro} has a unique solution.
Such a result is typically shown via contraction-type arguments,
see, e.g.,
\cite{NesterovScrimali2011}
or
\cite[Section~3.1.1]{AlphonseHintermuellerRautenberg2018}.
Thus, the approach of this section is able to reproduce this classical result.
However, the combination of \cref{thm:existence_uniqueness,lem:coercivity_A_I_Phi_potential}
yields a new result in case that $A$ has a convex potential
in which the condition \eqref{eq:general_L}
on the Lipschitz constant $L_\Phi$ of $\Phi$
is relaxed to
\eqref{eq:convex_L}.

\section{Localization of the smallness assumption}

We localize the assumptions concerning the Lipschitz constant of $\Phi$.
\begin{assumption}
	\label{asm:local_assumption}
	We assume that $A \colon V \to V\dualspace$ is (globally) $\mu_A$-strongly monotone and $L_A$-Lipschitz.
	Further, let $\bar f \in V\dualspace$ be given and let $\bar y$ be a solution of \eqref{eq:qvi_intro}.
	We suppose that there is a closed, convex neighborhood $Y \subset V$ of $\bar y$
	such that $\Phi$ is $L_\Phi$-Lipschitz continuous on $Y$.
	Finally,
	\begin{enumerate}[label=(\roman*)]
		\item
			inequality \eqref{eq:general_L} holds or
		\item
			inequality \eqref{eq:convex_L} holds and $A$ is the Fréchet derivative of a convex function.
	\end{enumerate}
\end{assumption}

\begin{theorem}
	\label{thm:localization}
	Suppose that \cref{asm:local_assumption} is satisfied.
	There is a neighborhood $F \subset V\dualspace$ of $f$
	such that \eqref{eq:qvi_intro} has exactly one solution in $Y$
	for all $f \in F$.
	Moreover, this solution depends Lipschitz-continuously on $f$.
\end{theorem}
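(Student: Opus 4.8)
The plan is to reduce the local statement to the global existence and uniqueness theorem \cref{thm:existence_uniqueness} by modifying $\Phi$ outside the neighborhood $Y$. Since \cref{asm:local_assumption} only guarantees that $\Phi$ is $L_\Phi$-Lipschitz \emph{on} $Y$, while \cref{thm:existence_uniqueness,lem:coercivity_A_I_Phi,lem:coercivity_A_I_Phi_potential} all require global Lipschitz continuity, the first step is to construct a modified map $\tilde\Phi \colon V \to V$ that agrees with $\Phi$ on a (possibly smaller) neighborhood of $\bar y$ and is globally $L_\Phi$-Lipschitz. The natural device is to compose $\Phi$ with the metric projection onto $Y$: I would set $\tilde\Phi := \Phi \circ \Proj_Y$. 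Since $Y$ is closed and convex, $\Proj_Y$ is $1$-Lipschitz and is the identity on $Y$; hence $\tilde\Phi$ is globally $L_\Phi$-Lipschitz, coincides with $\Phi$ on $Y$, and whenever $\bar y$ lies in the interior of $Y$ the two maps agree on a neighborhood of $\bar y$.

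With $\tilde\Phi$ in hand, the modified QVI with moving set $K + \tilde\Phi(\cdot)$ satisfies the hypotheses of \cref{lem:coercivity_A_I_Phi} under alternative (i) or \cref{lem:coercivity_A_I_Phi_potential} under alternative (ii) — the potential and strong monotonicity/Lipschitz constants of $A$ are untouched, and $\tilde\Phi$ has the same (global) Lipschitz constant $L_\Phi$, so either \eqref{eq:general_L} or \eqref{eq:convex_L} continues to hold. Thus \cref{asm:coercivity} is met for the modified problem, and \cref{thm:existence_uniqueness} yields, for every $f \in V\dualspace$, a unique solution $\tilde y(f)$ of the modified QVI together with the global Lipschitz dependence $\norm{\tilde y(f_1) - \tilde y(f_2)}_V \le C \, \norm{f_1 - f_2}_{V\dualspace}$. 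At $f = \bar f$ the original $\bar y$ solves the modified QVI as well (because $\tilde\Phi(\bar y) = \Phi(\bar y)$ and the constraint set $K + \tilde\Phi(\bar y)$ is identical there), so by uniqueness $\tilde y(\bar f) = \bar y$.

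The final step is a continuity/openness argument to relocalize. The point I expect to be the main obstacle is the transfer of solutions between the modified and original problems: a solution of the modified QVI solves the original one only when it actually lies in $Y$ (more precisely, in the region where $\Phi$ and $\tilde\Phi$ agree), since only there is $\tilde\Phi(y) = \Phi(y)$. I would exploit the Lipschitz estimate to secure this. Choosing a ball $F := \set{f \in V\dualspace \given \norm{f - \bar f}_{V\dualspace} < \rho}$ with $\rho$ small enough that $C \rho$ is smaller than the distance from $\bar y$ to the boundary of $Y$, the bound $\norm{\tilde y(f) - \bar y}_V \le C \, \norm{f - \bar f}_{V\dualspace} < C\rho$ forces $\tilde y(f)$ to remain in the interior of $Y$ for all $f \in F$. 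On that region $\tilde\Phi = \Phi$, so $\tilde y(f)$ is a genuine solution of \eqref{eq:qvi_intro} lying in $Y$, and it inherits the Lipschitz dependence on $f$. Uniqueness of the solution in $Y$ follows because any solution of \eqref{eq:qvi_intro} in $Y$ solves the modified QVI (the maps agree on $Y$), and the modified problem has a unique solution by \cref{thm:existence_uniqueness}.
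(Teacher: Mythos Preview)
Your proposal is correct and follows essentially the same approach as the paper: both define $\tilde\Phi := \Phi \circ \Proj_Y$, invoke \cref{lem:coercivity_A_I_Phi} or \cref{lem:coercivity_A_I_Phi_potential} together with \cref{thm:existence_uniqueness} for the modified QVI, and then use the Lipschitz continuity of the modified solution map (together with $\tilde S(\bar f) = \bar y$) to localize to a neighborhood $F$ on which the modified solution stays in $Y$, where $\tilde\Phi$ and $\Phi$ coincide. Your write-up is in fact slightly more detailed than the paper's at the final localization step, but the argument is the same.
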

Note that we do not claim that \eqref{eq:qvi_intro} is uniquely solvable for all $f \in F$
and
\eqref{eq:qvi_intro} might have further solutions in $V \setminus Y$.
\begin{proof}
	We define $\tilde\Phi \colon V \to V$ via
	\begin{equation*}
		\tilde\Phi(y)
		:=
		\Phi\paren[\big]{\Proj_Y(y)}.
	\end{equation*}
	Since projections are $1$-Lipschitz, $\tilde\Phi$ is $L_\Phi$-Lipschitz.
	Now, we consider the modified QVI
	\begin{equation}
		\label{eq:qvi_mod}
		\text{Find } y \in \tilde Q(y)
		\quad\text{such that}\quad
		\dual{
			A(y)
			-
			f
		}{v - y}
		\ge
		0
		\qquad
		\forall v \in \tilde Q(y)
	\end{equation}
	with
	\begin{equation*}
		\tilde Q(y)
		=
		K + \tilde\Phi(y)
		.
	\end{equation*}
	From
	\cref{asm:local_assumption,lem:coercivity_A_I_Phi,lem:coercivity_A_I_Phi_potential,thm:existence_uniqueness}
	it follows that \eqref{eq:qvi_mod} has a unique solution $y = \tilde S(f)$ for every $f \in F$
	and the solution operator $S \colon V\dualspace \to V$ is Lipschitz continuous.
	Hence, we can choose a neighborhood $F \subset V\dualspace$ of $\bar f$,
	such that $\tilde S(f) \in Y$ for all $f \in F$.

	Since $Q(y) = \tilde Q(y)$ for all $y \in Y$,
	it is clear that $y \in Y$
	is a solution of \eqref{eq:qvi_intro} if and only if
	$y \in Y$ solves \eqref{eq:qvi_mod}.
	Hence, \eqref{eq:qvi_intro} has a unique solution in $Y$
	for all $f \in F$.
\end{proof}

\section{Differential stability}
\label{sec:diff}
In this section,
we consider the situation of \cref{asm:local_assumption}.
However, we do not need \cref{asm:local_assumption}
directly,
but the assertion of \cref{thm:localization} is enough.

\begin{assumption}
	\label{asm:diff}
	We suppose that the following assumptions are satisfied.
	\begin{enumerate}[label=(\roman*)]
		\item\label{item:asm_initial}
			We assume the existence of sets $F \subset V\dualspace$, $Y \subset V$
			such that
			for every $f \in F$,
			\eqref{eq:qvi_intro} has a unique solution $y$ in $Y$
			and the solution map
			$S \colon F \to Y$, $f \mapsto y$
			is Lipschitz continuous.
			For fixed $\bar f \in F$, we set $\bar y := S(\bar f)$.
			The sets $F$, $Y$ are assumed to be neighborhoods
			of $\bar f$, $\bar y$, respectively.

		\item\label{item:asm_Phi}
			The operator $\Phi \colon V \to V$ is
			Lipschitz on $Y$, i.e.,
			there exists $L_\Phi > 0$ with
			\begin{equation*}
				\norm{ \Phi( y_1 ) - \Phi( y_2 ) }_V
				\le
				L_\Phi \,
				\norm{ y_1 - y_2 }_V
				\qquad\forall y_1, y_2 \in Y.
			\end{equation*}
			We suppose that
			$I - \Phi \colon Y \to Z$
			is bijective with a Lipschitz continuous inverse,
			where
			$Z := (I - \Phi)(Y)$.
			Further,
			$\Phi$ is Fréchet differentiable at $\bar y$
			and the bounded linear operator
			$I - \Phi'(\bar y)$ is bijective.

		\item\label{item:asm_A}
			The operator $A$ is Fréchet differentiable at
			$\bar y$
			and the bounded linear operator
			\begin{equation}
				\label{eq:linearized_operator}
				A'\paren{\bar y}
				\,
				(I - \Phi'(\bar y))^{-1}
			\end{equation}
			is assumed to be coercive.

		\item\label{item:asm_polyhedric}
			The set $K$ is polyhedric at $(\bar z, \bar f - A(\bar y))$,
			where $\bar z = (I - \Phi)(\bar y)$.
	\end{enumerate}
\end{assumption}
Due to \eqref{eq:moving_set},
the last assumption is equivalent to the polyhedricity
of $Q(y)$ at $(\bar y, \bar f - A(\bar y))$.

First, we show that \cref{asm:diff} follows from \cref{asm:local_assumption}
and from the differentiability of $\Phi$ and $A$.
\begin{theorem}
	\label{thm:asm51}
	Suppose that \cref{asm:local_assumption}
	is satisfied.
	Then, 
	\cref{asm:diff}~(i)
	holds.
	If $\Phi$ is Fréchet differentiable at $\bar y$,
	then 
	\cref{asm:diff}~\ref{item:asm_Phi} holds.
	If, additionally, $A$ is Fréchet differentiable at $\bar y$,
	then 
	\cref{asm:diff}~\ref{item:asm_A} is satisfied.
\end{theorem}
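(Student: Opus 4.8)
The plan is to verify the three implications in \cref{thm:asm51} one at a time, each time translating the hypotheses of \cref{asm:local_assumption} into the corresponding item of \cref{asm:diff}.

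For \cref{asm:diff}~(i), the statement is essentially a restatement of \cref{thm:localization}. Under \cref{asm:local_assumption}, \cref{thm:localization} already produces a neighborhood $F$ of $\bar f$ and the set $Y$ such that \eqref{eq:qvi_intro} has exactly one solution in $Y$ for every $f \in F$ and the solution map $S \colon F \to Y$ is Lipschitz continuous. Shrinking $F$ if necessary (and recalling that $Y$ is a neighborhood of $\bar y$ by hypothesis), this is precisely \cref{asm:diff}~(i) with $\bar y = S(\bar f)$.

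For \cref{asm:diff}~\ref{item:asm_Phi}, I would first note that the $L_\Phi$-Lipschitz continuity of $\Phi$ on $Y$ is directly assumed in \cref{asm:local_assumption}, and Fréchet differentiability of $\Phi$ at $\bar y$ is the added hypothesis. The two substantive points are the bijectivity of $I - \Phi \colon Y \to Z$ with Lipschitz inverse, and the bijectivity of the linearization $I - \Phi'(\bar y)$. For the former, I would reuse the argument from the proof of \cref{lem:coercivity_A_I_Phi}: since \eqref{eq:general_L} (or \eqref{eq:convex_L}, which also forces $L_\Phi < 1$ for admissible $\gamma_A$) gives $L_\Phi < 1$, the map $I - \Phi$ is a Lipschitz homeomorphism of $Y$ onto $Z := (I-\Phi)(Y)$ with inverse of Lipschitz constant $(1 - L_\Phi)^{-1}$, exactly as computed there. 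For the linearization, since $\Phi'(\bar y)$ is a bounded linear operator that is the derivative of an $L_\Phi$-Lipschitz map, its operator norm is at most $L_\Phi < 1$; hence the Neumann series for $(I - \Phi'(\bar y))^{-1}$ converges and $I - \Phi'(\bar y)$ is a bijection.

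For \cref{asm:diff}~\ref{item:asm_A}, the added hypothesis is Fréchet differentiability of $A$ at $\bar y$, and what must be shown is coercivity of the linearized operator \eqref{eq:linearized_operator}. The cleanest route is to observe that the linearization of $B = A \circ (I - \Phi)^{-1}$ at $\bar z = (I - \Phi)(\bar y)$ equals $A'(\bar y) \, (I - \Phi'(\bar y))^{-1}$ by the chain rule, and that \cref{lem:coercivity_A_I_Phi} or \cref{lem:coercivity_A_I_Phi_potential} already establishes the strong monotonicity of $B$ with an explicit modulus $\mu_B > 0$. \textbf{The main obstacle} is making the passage from strong monotonicity of the nonlinear map $B$ to coercivity of its derivative rigorous: one needs that a strongly monotone operator which is Fréchet differentiable at a point has a coercive derivative there. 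This follows from a standard difference-quotient argument — writing $\langle B(\bar z + t\,h) - B(\bar z), t\,h\rangle \ge \mu_B \, t^2 \, \|h\|^2$, dividing by $t^2$, and letting $t \to 0^+$ yields $\langle B'(\bar z)\,h, h\rangle \ge \mu_B \, \|h\|^2$ — but care is required because $B$ is only known to be strongly monotone, and its differentiability at $\bar z$ must be deduced from that of $A$ and $(I-\Phi)^{-1}$. Differentiability of $(I-\Phi)^{-1}$ at $\bar z$ follows from the inverse function theorem together with the bijectivity of $I - \Phi'(\bar y)$ just established in \ref{item:asm_Phi}, so the chain rule applies and the coercivity modulus $\mu_B$ transfers to \eqref{eq:linearized_operator}.
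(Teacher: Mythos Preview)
Your proof is correct, and parts (i) and (ii) match the paper's argument essentially verbatim (localization theorem for (i), Lipschitz constant $L_\Phi<1$ plus Neumann series for (ii)).

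For part (iii) you take a genuinely different route. The paper does \emph{not} pass through the nonlinear operator $B$: instead it observes that the derivative $A'(\bar y)$ inherits the constants $\mu_A$ and $L_A$ from $A$, that $\Phi'(\bar y)$ inherits the Lipschitz constant $L_\Phi$, and then applies \cref{lem:coercivity_A_I_Phi} or \cref{lem:coercivity_A_I_Phi_potential} \emph{directly to the linear pair} $(A'(\bar y),\Phi'(\bar y))$. In the potential case this requires one extra observation: since $A=g'$, the second derivative $A'(\bar y)=g''(\bar y)$ is symmetric and hence itself the gradient of the convex quadratic $v\mapsto\tfrac12\dual{A'(\bar y)v}{v}$, so \cref{lem:coercivity_A_I_Phi_potential} applies. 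Your approach---showing $B$ is strongly monotone via the lemmas, then deducing coercivity of $B'(\bar z)$ by the difference-quotient limit $\dual{B'(\bar z)h}{h}=\lim_{t\searrow 0}t^{-2}\dual{B(\bar z+th)-B(\bar z)}{th}\ge\mu_B\norm{h}^2$---is equally valid and arguably more conceptual, since the case split is handled once at the nonlinear level and nothing special about the symmetric case needs to be rechecked. The paper's route, on the other hand, sidesteps the localization issue entirely (the lemmas are stated for globally Lipschitz $\Phi$, whereas your $B$ is only strongly monotone on $Z$; you implicitly need the extension $\tilde\Phi=\Phi\circ\Proj_Y$ from the proof of \cref{thm:localization} or a local version of the lemmas) and avoids invoking differentiability of $(I-\Phi)^{-1}$ at this stage.
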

\begin{proof}
	\cref{asm:diff}~\ref{item:asm_initial}
	follows from \cref{thm:localization}.

	Since $L_\Phi < 1$,
	Banach's fixed point theorem implies that
	$I - \Phi$ is bijective with a Lipschitz continuous inverse.
	The invertibility of $I - \Phi'(\bar y)$
	follows
	from the Neumann series since $\norm{\Phi'(\bar y)} \le L_\Phi < 1$.

	If $A$ is Fréchet differentiable at $\bar y$,
	\cref{asm:local_assumption}
	implies that $A'(\bar y)$ is $\mu_A$-strongly monotone and $L_A$-Lipschitz.
	In case that \eqref{eq:general_L} is satisfied,
	we can invoke \cref{lem:coercivity_A_I_Phi}
	to obtain
	\cref{asm:diff}~\ref{item:asm_A}.
	Otherwise, $A$ is the Fréchet derivative of a convex function.
	Hence, $A'(\bar y)$ is symmetric since it is a second Fréchet derivative,
	see \cite[Theorem~5.1.1]{Cartan1967}.
	Thus, $A'(\bar y)$ is the derivative of the convex function
	$v \mapsto \dual{A'(\bar y)\,v}{v}/2$.
	Therefore, we can invoke \cref{lem:coercivity_A_I_Phi_potential}
	to obtain
	\cref{asm:diff}~\ref{item:asm_A}.
\end{proof}

\begin{lemma}
	\label{lem:differentiability_I_Phi}
	Let us assume that \cref{asm:diff}~\ref{item:asm_initial}--\ref{item:asm_Phi} is satisfied.
	Then, $(I - \Phi)^{-1}$
	is Fréchet differentiable at $\bar z := (I - \Phi)(\bar y)$
	and
	$( (I-\Phi)^{-1} )'(\bar z) = (I-\Phi'(\bar y))^{-1}$.
\end{lemma}
\begin{proof}
	For arbitrary $h \in V$ we have
	\begin{equation*}
		h
		=
		(I - \Phi)
		\bracks[\big]{ (I-\Phi)^{-1} (\bar z + h) - \bar y + \bar y} - \bar z
		.
	\end{equation*}
	Using the Fréchet differentiability of $\Phi$ at $\bar y$
	implies
	\begin{equation*}
		h
		=
		(I - \Phi'(\bar y))
		\bracks[\big]{ (I - \Phi)^{-1}(\bar z + h) - \bar y}
		+
		\oo\paren[\big]{\norm{(I-\Phi)^{-1}(\bar z + h) - \bar y}_V}
	\end{equation*}
	as $\norm{(I-\Phi)^{-1}(\bar z + h) - \bar y}_V \to 0$.
	Finally, using the fact that $(I-\Phi)^{-1}$ is Lipschitz
	implies
	\begin{equation*}
		(I-\Phi'(\bar y))^{-1} h
		=
		(I - \Phi)^{-1}(\bar z + h) - (I - \Phi)^{-1}(\bar z)
		+
		\oo(\norm{h}_V)
		\quad\text{as}\quad \norm{h}_V \to 0
		.
	\end{equation*}
	This shows the claim.
\end{proof}

\begin{lemma}
	\label{lem:strong_monotonicity_B}
	Let us assume that \cref{asm:diff}~\ref{item:asm_initial}--\ref{item:asm_A} is satisfied.
	The operator $B := A \circ (I - \Phi)^{-1}$
	is Fréchet differentiable at $\bar z$
	and its Fréchet derivative
	is given by
	$B'(\bar z)= A'(\bar y) \, (I - \Phi'(\bar y))^{-1}$.
\end{lemma}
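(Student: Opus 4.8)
The plan is to read $B = A \circ (I-\Phi)^{-1}$ as a composition of two maps whose differentiability has already been secured and to apply the chain rule. The inner map $(I-\Phi)^{-1}$ is Fréchet differentiable at $\bar z$ with derivative $(I-\Phi'(\bar y))^{-1}$ and value $\bar y$ by \cref{lem:differentiability_I_Phi}, while the outer map $A$ is, by \cref{asm:diff}~\ref{item:asm_A}, Fréchet differentiable precisely at the point $\bar y = (I-\Phi)^{-1}(\bar z)$. The chain rule then predicts $B'(\bar z) = A'(\bar y)\,(I-\Phi'(\bar y))^{-1}$, which is exactly the claimed formula; so the only work is to justify the composition estimate, keeping in mind that $(I-\Phi)^{-1}$ is only defined on $Z = (I-\Phi)(Y)$.

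Concretely, I would set $d(h) := (I-\Phi)^{-1}(\bar z + h) - \bar y$ for $h$ small enough that $\bar z + h \in Z$. \cref{lem:differentiability_I_Phi} gives the expansion $d(h) = (I-\Phi'(\bar y))^{-1} h + \oo(\norm{h}_V)$ as $\norm{h}_V \to 0$. Writing $B(\bar z + h) = A(\bar y + d(h))$ and expanding $A$ by its Fréchet derivative at $\bar y$ yields $B(\bar z + h) - B(\bar z) = A'(\bar y)\,d(h) + \oo(\norm{d(h)}_V)$. Substituting the expansion of $d(h)$ and using that $A'(\bar y)$ is a bounded linear operator (hence maps a term of order $\oo(\norm{h}_V)$ to one of order $\oo(\norm{h}_V)$) produces $A'(\bar y)\,(I-\Phi'(\bar y))^{-1} h$ as the linear part.

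The only delicate point --- and hence the main, if minor, obstacle --- is controlling the remainder $\oo(\norm{d(h)}_V)$ coming from the expansion of $A$: a priori it is measured in $\norm{d(h)}_V$ rather than in $\norm{h}_V$. This is exactly where the Lipschitz continuity of $(I-\Phi)^{-1}$ from \cref{asm:diff}~\ref{item:asm_Phi} enters: it gives $\norm{d(h)}_V \le L\,\norm{h}_V$ for some $L \ge 0$, so that $\oo(\norm{d(h)}_V) = \oo(\norm{h}_V)$. Collecting all remainder terms then gives $B(\bar z + h) - B(\bar z) = A'(\bar y)\,(I-\Phi'(\bar y))^{-1} h + \oo(\norm{h}_V)$ as $\norm{h}_V \to 0$, which is the assertion.
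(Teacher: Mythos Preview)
Your argument is correct and is exactly the approach the paper takes: the paper's proof is the single line ``Follows from \cref{lem:differentiability_I_Phi} together with a chain rule.'' You have simply unpacked this chain rule in full detail, including the use of the Lipschitz continuity of $(I-\Phi)^{-1}$ to convert $\oo(\norm{d(h)}_V)$ into $\oo(\norm{h}_V)$.
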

\begin{proof}
	Follows from \cref{lem:differentiability_I_Phi} together with a chain rule.
\end{proof}

\begin{theorem}
	\label{thm:differentiability}
	Let us assume that \cref{asm:diff} is satisfied.
	Then, the solution map $S$ is directionally differentiable at $\bar f$
	and the directional derivative $x := S'(\bar f; h)$ in direction $h \in V\dualspace$
	is given by the unique solution of the QVI
	\begin{equation}
		\label{eq:linearized_QVI}
		\text{Find } x \in Q^{\bar y}(x)
		\quad\text{such that}\quad
		\dual{
			A'(\bar y) \, x
			-
			h
		}{v - x}
		\ge
		0
		\qquad
		\forall v \in Q^{\bar y}(x)
		,
	\end{equation}
	where the set-valued mapping $Q^{\bar y} \colon V \mto V$ is given by
	\begin{equation*}
		Q^{\bar y}(x)
		:=
		\KK_K(\bar z, \bar f - A(\bar y)) + \Phi'(\bar y) \, x
		.
	\end{equation*}
\end{theorem}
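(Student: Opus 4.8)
The plan is to reduce the differentiability of the QVI solution map $S$ to the classical differentiability theory for variational inequalities, using the equivalence between \eqref{eq:qvi_intro} and \eqref{eq:VI} from \cref{sec:QVI_as_VI}, and then to transfer the resulting linearized VI back to the linearized QVI \eqref{eq:linearized_QVI} by the same change of variables. Denote by $\Sigma \colon f \mapsto z$ the solution operator of the VI \eqref{eq:VI} with $B := A \circ (I-\Phi)^{-1}$, so that $S = (I-\Phi)^{-1} \circ \Sigma$. By \cref{asm:diff}~\ref{item:asm_initial} together with this equivalence, $\Sigma$ is Lipschitz continuous near $\bar f$ and $\Sigma(\bar f) = \bar z$. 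Since $\bar z$ solves \eqref{eq:VI}, the multiplier $\bar\lambda := \bar f - A(\bar y) = \bar f - B(\bar z)$ belongs to $\TT_K(\bar z)\polar$, and it is precisely the point at which \cref{asm:diff}~\ref{item:asm_polyhedric} asserts polyhedricity of $K$.

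The analytic core is to show that $\Sigma$ is directionally differentiable at $\bar f$. By \cref{lem:strong_monotonicity_B}, $B$ is Fréchet differentiable at $\bar z$ with $B'(\bar z) = A'(\bar y)\,(I - \Phi'(\bar y))^{-1}$, which is coercive by \cref{asm:diff}~\ref{item:asm_A}, while $K$ is polyhedric at $(\bar z, \bar\lambda)$. These are exactly the hypotheses of Mignot's theorem on the conical differentiability of solutions to variational inequalities, which I would establish directly as follows. For $t \downarrow 0$ set $\delta_t := t^{-1}(\Sigma(\bar f + t\,h) - \bar z)$; the Lipschitz estimate keeps $\delta_t$ bounded. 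Testing the VIs for $\Sigma(\bar f + t\,h)$ and for $\bar z$ against each other, adding, and inserting the Fréchet expansion of $B$ at $\bar z$ yields, through the coercivity of $B'(\bar z)$ and the polyhedricity of $K$, that every weak limit $w$ of $\delta_t$ lies in the critical cone $\KK_K(\bar z,\bar\lambda)$ and satisfies the linearized VI
\begin{equation*}
	w \in \KK_K(\bar z,\bar\lambda), \qquad \dual{B'(\bar z)\,w - h}{v - w} \ge 0 \quad \forall v \in \KK_K(\bar z,\bar\lambda).
\end{equation*}
Polyhedricity is what forces the limit into the critical cone and produces the variational inequality on it, while coercivity of $B'(\bar z)$ gives uniqueness of $w$ and upgrades the weak convergence of $\delta_t$ to strong convergence; hence $\Sigma'(\bar f; h) = w$ exists. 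I expect this step to be the main obstacle, the remaining steps being reduction and transfer.

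It then remains to transfer this back. Since $(I-\Phi)^{-1}$ is Fréchet differentiable at $\bar z$ with derivative $(I - \Phi'(\bar y))^{-1}$ by \cref{lem:differentiability_I_Phi}, and $\Sigma$ is directionally differentiable at $\bar f$, the composition $S = (I-\Phi)^{-1} \circ \Sigma$ is directionally differentiable at $\bar f$ with $x := S'(\bar f; h) = (I - \Phi'(\bar y))^{-1}\,w$; the Fréchet differentiability of the outer map makes this chain rule routine. Finally, I would verify that this $x$ solves \eqref{eq:linearized_QVI}, which is the linearized analogue of the equivalence of \cref{sec:QVI_as_VI}. Writing $w = (I - \Phi'(\bar y))\,x$, the membership $x \in Q^{\bar y}(x)$ is equivalent to $w \in \KK_K(\bar z,\bar\lambda)$; and for any test element $v = \kappa + \Phi'(\bar y)\,x \in Q^{\bar y}(x)$ with $\kappa \in \KK_K(\bar z,\bar\lambda)$ one computes $v - x = \kappa - w$ and $A'(\bar y)\,x = B'(\bar z)\,w$, so the inequality in \eqref{eq:linearized_QVI} coincides with the linearized VI for $w$. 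Thus $x$ solves \eqref{eq:linearized_QVI}, and the uniqueness of $w$ (again from coercivity of $B'(\bar z)$ on the convex cone $\KK_K(\bar z,\bar\lambda)$) carries over to uniqueness of $x$.
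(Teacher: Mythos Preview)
Your proposal is correct and follows essentially the same route as the paper: reduce the QVI to the VI \eqref{eq:VI} via $z = (I-\Phi)(y)$, establish directional differentiability of the VI solution map at $\bar z$ using the Fr\'echet differentiability of $B$ (\cref{lem:strong_monotonicity_B}), the coercivity of $B'(\bar z)$, and the polyhedricity of $K$, and then transfer back via the chain rule with $(I-\Phi)^{-1}$ (\cref{lem:differentiability_I_Phi}) and the change of variables $w = (I-\Phi'(\bar y))\,x$. The only difference is that the paper outsources the VI differentiability step to \cite[Theorem~2.13]{ChristofWachsmuth2017:3} (checking its hypotheses, in particular invoking \cref{lem:wlsc} for the weak lower semicontinuity), whereas you sketch the Mignot-type argument directly; the structure and the key ingredients are otherwise identical.
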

Note that we have
\begin{equation*}
	\KK_K(\bar z, \bar f - A(\bar y)) = \KK_{Q(\bar y)}(\bar y, \bar f - A(\bar y))
\end{equation*}
due to \eqref{eq:moving_set}.
\begin{proof}
	Let $h \in V\dualspace$ be given.
	There exists $T > 0$ such that $\bar f + t \, h \in F$ for all $t \in [0,T)$.
	For $t \in (0,T)$ we define
	\begin{align*}
		y_t &:= S\paren[\big]{\bar f + t \, h},
		&
		x_t &:= \frac{y_t - \bar y}{t}
		\\
		z_t &:= (I - \Phi)(y_t)
		&
		w_t &:= \frac{z_t - \bar z}{t}
		.
	\end{align*}
	Since $S$ is assumed to be Lipschitz continuous on $F$,
	the difference quotients $\set{ x_t \given t \in (0,T) }$
	are bounded in $V$.
	The Lipschitz continuity of $\Phi$
	implies
	the boundedness of $\set{ w_t \given t \in (0,T) }$ in $V$.

	Since $z_t$ solves the VI \eqref{eq:VI}, i.e.,
	\begin{equation*}
		\text{Find } z \in K
		\quad\text{such that}\quad
		\dual[\big]{
			B(z)
			-
			f
		}{v - z}
		\ge
		0
		\qquad
		\forall v \in K
	\end{equation*}
	with $f := \bar f + t \, h$,
	we can apply \cite[Theorem~2.13]{ChristofWachsmuth2017:3}
	to obtain the convergence of the difference quotients $w_t$.
	Let us check that the assumptions of
	\cite[Theorem~2.13]{ChristofWachsmuth2017:3}
	are satisfied.
	The standing assumption
	\cite[Assumption~2.1]{ChristofWachsmuth2017:3}
	is satisfied in our Hilbert space setting
	with $j = \delta_K$ being the indicator function (in the sense of convex analysis)
	of the set $K$.
	The validity of
	\cite[Assumption~2.2]{ChristofWachsmuth2017:3}
	follows from the Taylor expansion
	\begin{equation*}
		B(z_t)
		=
		B(\bar z + t \, w_t)
		=
		B(\bar z) + t \, B'(\bar z) \, w_t + r(t)
	\end{equation*}
	with $r(t) = \oo(\norm{z_t - \bar z}_V) = \oo(t)$,
	see \cref{lem:strong_monotonicity_B}.
	It remains to check that
	the assumption
	\cite[Theorem~2.13~(ii)]{ChristofWachsmuth2017:3}
	holds:
	\begin{itemize}
		\item
			Since $K$ is assumed to be polyhedric at $\bar z$ w.r.t.\ $\bar f - A(\bar y)$,
			its indicator function $\delta_K$
			is twice epi-differentiable at $(\bar z, \bar f - A(\bar y))$,
			see \cite[Corollary~3.3]{ChristofWachsmuth2017:3}.
			Moreover, its second subderivative is the indicator function
			of the critical cone
			$\KK_K(\bar z, \bar f - A(\bar y)) := \TT_K(\bar z) \cap (\bar f - A(\bar y))\anni$.
		\item
			The weak convergence $w_n \weakly w$ in $V$ implies
			$B'(\bar z) \, w_n \weakly B'(\bar z) \, w$ in $V\dualspace$
			and
			$\liminf_{n \to \infty} \dual{B'(\bar z) w_n}{w_n} \ge \dual{B'(\bar z)\,w}{w}$
			follows from the coercivity of the linear operator
			$B'(\bar z) = A'(\bar y) \, (I - \Phi'(\bar y))^{-1}$,
			see
			\cref{lem:wlsc},
			\cref{asm:diff}~\ref{item:asm_A} and \cref{lem:strong_monotonicity_B}.
	\end{itemize}
	Thus, the application of \cite[Theorem~2.13]{ChristofWachsmuth2017:3}
	yields that all accumulation points $w$ of $w_t$ for $t \searrow 0$
	are solutions of the linearized VI
	\begin{equation}
		\label{eq:linearized_VI}
		\text{Find } w \in \KK_K(\bar z, \bar f - A(\bar y))
		\quad\text{such that}\quad
		\dual[\big]{
			B'(\bar z) \, w
			-
			h
		}{v - w}
		\ge
		0
		\qquad
		\forall v \in \KK_K(\bar z, \bar f - A(\bar y))
		.
	\end{equation}
	Since $B'(\bar z)$ is coercive,
	this linearized VI has a unique solution.
	Hence, the last part of
	\cite[Theorem~2.13]{ChristofWachsmuth2017:3}
	implies $w_t \to w$ as $t \searrow 0$.

	It remains to prove the convergence of $x_t$
	towards the solution of \eqref{eq:linearized_QVI}.
	Using the differentiability of $(I - \Phi)^{-1}$,
	we find
	\begin{align*}
		x_t
		&=
		\frac{y_t - \bar y}{t}
		=
		\frac{ (I-\Phi)^{-1}(z_t) - (I-\Phi)^{-1}(\bar z)}{t}
		\\
		&=
		(I-\Phi'(\bar y))^{-1} \frac{z_t - \bar z}{t} + \frac{\oo(\norm{z_t-\bar z}_V)}{t}
		\\
		&\to
		(I-\Phi'(\bar y))^{-1} \, w
		=:
		x
		.
	\end{align*}
	The change of variables
	$w = (I - \Phi'(\bar y)) \, x$
	shows the equivalence of
	\eqref{eq:linearized_QVI} and \eqref{eq:linearized_VI}.
	Thus, $x$ is the unique solution of \eqref{eq:linearized_QVI}.
\end{proof}

Some remarks concerning \cref{thm:differentiability} are in order.
\begin{remark}\leavevmode
	\label{rem:directional_Phi}
	\begin{enumerate}[label=(\roman*)]
		\item
			The polyhedricity assumption \cref{asm:diff}~\ref{item:asm_polyhedric}
			can be replaced by
			the strong twice epi-differentiability of the indicator function $\delta_K$
			in the sense of \cite[Definition~2.9]{ChristofWachsmuth2017:3}.
			Under this generalized assumption,
			the second epi-derivative of $\delta_K$
			appears as a curvature term in the linearized inequalities
			\eqref{eq:linearized_QVI} and \eqref{eq:linearized_VI}.
			Note that the indicator function of the critical cone
			$\KK_K(\bar z, \bar f - A(\bar y))$,
			which appears implicitly
			in \eqref{eq:linearized_QVI} and \eqref{eq:linearized_VI},
			is just the second epi-derivative of $\delta_K$
			in the case of $K$ being polyhedric.
		\item
			We have derived the differentiability result under the assumption that $\Phi$ is Fréchet differentiable
			at $\bar y$.
			In the notation of \cite{ChristofWachsmuth2017:3},
			this translates to linearity of the operator $A_x$.
			However, the inspection of the proof of \cite[Theorem~2.13]{ChristofWachsmuth2017:3}
			entails that it is possible to replace the Fréchet differentiability of $\Phi$
			by the following set of assumptions:
			\begin{enumerate}[label=(\alph*)]
				\item
					$\Phi$ is Bouligand differentiable at $\bar y$, i.e., there exists
					an operator $\Phi'(\bar y; \cdot) \colon V \to V$ such that
					\begin{equation*}
						\norm{ \Phi(\bar y + h) - \Phi(\bar y) - \Phi'(\bar y; h) }_V = \oo(\norm{h}_V)
						\qquad\text{as } \norm{h}_V \to 0.
					\end{equation*}
				\item
					For every sequence $w_n \weakly w$ in $V$, we assume
					\begin{subequations}
						\label{eq:asm_direc_diff}
						\begin{align}
							\label{eq:asm_direc_diff_1}
							(I - \Phi'(\bar y; \cdot))^{-1}(w_n)
							&\weakly
							(I - \Phi'(\bar y; \cdot))^{-1}(w)
							,
							\\
							\label{eq:asm_direc_diff_2}
							\mspace{-32mu}
							\liminf_{n \to \infty} \dual{ A'(\bar y) \, (I - \Phi'(\bar y; \cdot))^{-1}(w_n) }{ w_n }
							&\ge
							\dual{ A'(\bar y) \, (I - \Phi'(\bar y; \cdot))^{-1}(w) }{ w }
							.
						\end{align}
					\end{subequations}
			\end{enumerate}
			Note that (a) implies that
			$\Phi'(\bar y; \cdot)$ is Lipschitz on $V$ with constant $L_\Phi$.
			Hence, $(I - \Phi'(\bar y;\cdot))$ is invertible
			and \cref{lem:coercivity_A_I_Phi,lem:coercivity_A_I_Phi_potential}
			can be used to obtain the strong monotonicity of
			$A'(\bar y) \, (I - \Phi'(\bar y; \cdot))^{-1}$.

			Property \eqref{eq:asm_direc_diff_1} can be verified by assuming, e.g.,
			weak continuity of $\Phi'(\bar y; \cdot)$.
			Indeed,
			the sequence $z_n := (I - \Phi'(\bar y; \cdot))^{-1}(w_n)$
			is bounded, hence, $z_n \weakly z$ along a subsequence.
			Now, weak continuity implies
			$w_n = z_n - \Phi'(\bar y; z_n) \weakly z - \Phi'(\bar y; z)$
			and $w_n \weakly w$ implies $z = (I - \Phi'(\bar y; \cdot))^{-1}(w)$,
			i.e. $z_n \weakly (I - \Phi'(\bar y; \cdot))^{-1}(w)$ along a subsequence.
			The uniqueness of the limit point implies the convergence of the entire sequence.

			Finally, \eqref{eq:asm_direc_diff_2}
			can be obtained via \eqref{eq:asm_direc_diff_1} and \cref{lem:wlsc}.
	\end{enumerate}
\end{remark}

In the next remark,
we compare
our differentiability result
with \cite[Theorem~1]{AlphonseHintermuellerRautenberg2019}.
\begin{remark}
	\label{rem:comparison}
	In
	\cite[Theorem~1]{AlphonseHintermuellerRautenberg2019}
	a similar differentiability result
	is obtained
	in a more restrictive setting:
	\begin{enumerate}[label=(\roman*)]
		\item
			Therein, the leading operator $A$ has to be linear
			and $T$-monotone (w.r.t.\ a vector space order on $V$).
			Our approach also allows for non-linear operators
			and we do not need any order structure on $V$.
			Similarly, we do not need any monotonicity assumptions on $\Phi$.
		\item
			They require the complete continuity of
			$\Phi'(\bar y)$,
			which is not needed in \cref{thm:differentiability}.
		\item
			One of their most restrictive assumptions is
			the assumption (\textbf{A5}).
			Via \cite[Theorem~3.1.2]{Cartan1967},
			this assumption is equivalent to $\Phi$
			being $L_\Phi$-Lipschitz with
			\begin{equation}
				\label{eq:ineq_AlphonseHintermuellerRautenberg2019}
				L_\Phi
				<
				\frac{1}{1 + \gamma_A}.
			\end{equation}
			This inequality is much stronger than \eqref{eq:general_L}.
			Thus, their assumption (\textbf{A5})
			implies that the solutions to the QVI \eqref{eq:qvi_intro}
			are unique.
	\end{enumerate}

	Moreover,
	they obtained the differentiability only for non-negative
	directions
	whereas
	our approach is applicable to arbitrary perturbations of the right-hand side.

	One assumption in \cite{AlphonseHintermuellerRautenberg2019}
	is weaker:
	they only need Hadamard differentiability of $\Phi$.
	We need Fréchet differentiability
	(or Bouligand differentiability, see \cref{rem:directional_Phi}).
\end{remark}

\section{Optimal control}
\label{sec:optimal_control}
In this section,
we consider the optimal control problem
\begin{equation}
	\label{eq:optimal_control}
	\begin{aligned}
		\text{Minimize} \quad & J(y,u) \\
		\text{w.r.t.}\quad & y \in V, u \in U \\
		\text{s.t.}\quad & y \in Q(y)
		\quad\text{and}\quad
		\dual{
			A(y)
			-
			(B \, u + f)
		}{v - y}
		\ge
		0
		\quad
		\forall v \in Q(y)
		.
	\end{aligned}
\end{equation}
Here, $f \in V\dualspace$ is fixed,
$U$ is a Hilbert space
and the bounded, linear operator
$B \colon U \to V\dualspace$
is assumed to have a dense range.
Moreover,
the objective $J \colon V \times U \to \R$
is Fréchet differentiable.

\begin{theorem}
	\label{thm:strong_stationarity}
	Suppose that $(\bar y, \bar u)$ is locally optimal for \eqref{eq:optimal_control}.
	In addition to the assumptions on $B$ and $J$,
	we assume that \cref{asm:diff} is satisfied at $\bar f := B \, \bar u + f$.
	Then, there exist unique multipliers $p \in V$, $\mu \in V\dualspace$
	such that the system
	\begin{subequations}
		\label{eq:optimal_control_VI_OS}
		\begin{align}
			\label{eq:optimal_control_VI_OS_1}
			J_y(\cdot) + A'(\bar y)^\star \, p + (I - \Phi'(\bar y))\adjoint\mu &= 0, \\
			\label{eq:optimal_control_VI_OS_2}
			J_u(\cdot) -B^\star \, p &= 0, \\
			\label{eq:optimal_control_VI_OS_3}
			p &\in -\KK_K(\bar z, \bar \lambda), \\
			\label{eq:optimal_control_VI_OS_4}
			\mu &\in \KK_K(\bar z, \bar \lambda)\polar
		\end{align}
	\end{subequations}
	is satisfied.
	Here,
	\begin{equation}
		\label{eq:barzbarlambda}
		\bar z = (I - \Phi)(\bar y) \in K
		\qquad\text{and}\qquad
		\bar\lambda = B \, \bar u + f - A(\bar y) \in \TT_K(\bar z)\polar,
	\end{equation}
	and
	$J_y(\cdot) \in V\dualspace$ and $J_u(\cdot) \in U\dualspace$
	are the partial Fréchet derivatives of $J$ at $(\bar u, \bar y)$.
\end{theorem}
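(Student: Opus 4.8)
The plan is to proceed in three stages: first deduce a Bouligand (B-)stationarity inequality from local optimality together with the directional differentiability of \cref{thm:differentiability}; then rewrite this inequality in terms of the linearized variational inequality on the critical cone $C := \KK_K(\bar z, \bar\lambda)$; and finally extract the adjoint state $p$ and the multiplier $\mu$. I begin by recording \eqref{eq:barzbarlambda}: since $\bar y$ solves the QVI with source $\bar f := B\,\bar u + f$, the reformulation of \cref{sec:QVI_as_VI} shows that $\bar z = (I-\Phi)(\bar y) \in K$ solves the VI \eqref{eq:VI}, whose defining inequality $\dual{A(\bar y) - \bar f}{v - \bar z} \ge 0$ for all $v \in K$ is exactly the statement $\bar\lambda = \bar f - A(\bar y) \in \TT_K(\bar z)\polar$. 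Introducing the reduced objective $j(u) := J\paren{S(B\,u + f), u}$ and using that \cref{asm:diff} holds at $\bar f$, \cref{thm:differentiability} gives the directional differentiability of $S$ at $\bar f$, so the chain rule for directional derivatives yields $j'(\bar u; h) = \dual{J_y(\cdot)}{S'(\bar f; B\,h)} + \dual{J_u(\cdot)}{h}$ for every $h \in U$. Local optimality of $(\bar y, \bar u)$ then forces the B-stationarity inequality $j'(\bar u; h) \ge 0$ for all $h \in U$.

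For the second stage I unfold the derivative. By \cref{thm:differentiability,lem:differentiability_I_Phi} one has $S'(\bar f; B\,h) = (I - \Phi'(\bar y))^{-1} w_h$, where $w_h \in C$ is the unique solution of the linearized VI \eqref{eq:linearized_VI} with right-hand side $B\,h$, governed by the operator $M := A'(\bar y)\,(I - \Phi'(\bar y))^{-1}$; this operator is coercive by \cref{asm:diff}~\ref{item:asm_A} and \cref{lem:strong_monotonicity_B}. From the VI one reads off the relations $w_h \in C$, $\;B\,h - M\,w_h \in C\polar$, and $\dual{B\,h - M\,w_h}{w_h} = 0$. Putting $q := (I - \Phi'(\bar y))^{-\star} J_y(\cdot)$ and exploiting $\dual{J_y(\cdot)}{(I - \Phi'(\bar y))^{-1} w_h} = \dual{q}{w_h}$, the B-stationarity inequality takes the clean form $\dual{q}{w_h} + \dual{J_u(\cdot)}{h} \ge 0$ for all $h \in U$.

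The third stage is to produce $p \in V$ and $\mu \in V\dualspace$ with $B^\star p = J_u(\cdot)$, $p \in -C$ and $\mu \in C\polar$. Because $A'(\bar y)^\star = (I - \Phi'(\bar y))^\star M^\star$ and $q = (I - \Phi'(\bar y))^{-\star} J_y(\cdot)$, the identity $\mu = -q - M^\star p$ is equivalent to \eqref{eq:optimal_control_VI_OS_1}; hence these relations are precisely the system \eqref{eq:optimal_control_VI_OS}. What has to be solved is exactly the strong-stationarity system for the optimal control of a VI on the polyhedric cone $C$, with coercive operator $M$ and control operator $B$ of dense range. The construction rests on the polyhedricity of $K$, equivalently of $C$: one tests the B-stationarity inequality along directions $h$ for which $w_h$ ranges over $C$ and, in particular, over its lineality space, and combines this with the complementarity relations of the preceding paragraph; this produces the adjoint identity and the sign conditions first on the range of $B$, and, because that range is dense in $V\dualspace$, they extend by continuity to all of $V\dualspace$, delivering $B^\star p = J_u(\cdot)$, $p \in -C$ and $\mu \in C\polar$.

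I expect this third stage to be the main obstacle. The variational inequality characterizing $(p, \mu)$ carries the operator $-M^\star$, which is \emph{not} coercive, so $p$ cannot be produced by a Lax--Milgram or coercive-VI argument and must instead be read off from the optimality inequality itself; it is exactly here that the polyhedricity of $K$ and the density of the range of $B$ are indispensable, and relaxing either typically leaves only a weaker (C- or B-) stationarity. Uniqueness of the multipliers, in contrast, is immediate: density of the range of $B$ makes $B^\star$ injective, so \eqref{eq:optimal_control_VI_OS_2} determines $p$ and then \eqref{eq:optimal_control_VI_OS_1} determines $\mu$.
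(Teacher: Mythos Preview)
Your first two stages are correct and align with the paper: both derive the B-stationarity inequality from local optimality and \cref{thm:differentiability}, and both pass to the linearized VI on the critical cone $C := \KK_K(\bar z,\bar\lambda)$.

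The third stage, however, has a genuine gap in the order of operations. You propose to test the inequality $\dual{q}{w_h} + \dual{J_u(\cdot)}{h} \ge 0$ along directions $h \in U$ for which $w_h$ ranges over $C$, and only afterwards to invoke the density of $\operatorname{range}(B)$. This does not work as stated: for a given $w \in C$ one can pick $h_n \in U$ with $B h_n \to M w$ in $V\dualspace$, and then $w_{h_n} \to w$ by Lipschitz dependence of the linearized VI, but nothing controls $\dual{J_u(\cdot)}{h_n}$ because the $h_n$ themselves are in general unbounded in $U$. The paper reverses the order. From the B-stationarity inequality and the Lipschitz bound $\norm{S'(\bar f; B h)}_V \le c\,\norm{Bh}_{V\dualspace}$ one first deduces $\abs{\dual{J_u(\cdot)}{h}} \le c\,\norm{Bh}_{V\dualspace}$; this allows one to \emph{define} $p \in V$ as the unique continuous extension of $B h \mapsto \dual{J_u(\cdot)}{h}$ from the dense subspace $\operatorname{range}(B)$ to all of $V\dualspace$, which is already \eqref{eq:optimal_control_VI_OS_2}. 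The optimality inequality then extends by density to $\dual{J_y(\cdot)}{S'(\bar f; h)} + \dual{p}{h} \ge 0$ for \emph{every} $h \in V\dualspace$. Only now does one test: the choice $h \in C\polar$ gives $S'(\bar f; h) = 0$ and hence $p \in -C$; the choice $h = M w$ with $w \in C$ gives $S'(\bar f; h) = (I-\Phi'(\bar y))^{-1} w$ and, with $\mu := -(I-\Phi'(\bar y))^{-\star}(J_y(\cdot) + A'(\bar y)^\star p)$, yields $\mu \in C\polar$.

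Two smaller remarks. Testing over the lineality space of $C$ is not needed; the two test families above suffice. And polyhedricity plays no further role in this stage: it was already consumed in \cref{thm:differentiability} to obtain the directional derivative, and the construction of $(p,\mu)$ uses only that derivative together with the density of $\operatorname{range}(B)$. Your uniqueness argument is correct.
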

\begin{proof}
	We use classical arguments
	dating back to \cite[Proposition~4.1]{Mignot1976},
	see also \cite[Theorem~5.3]{Wachsmuth2016:2}.

	Due to \cref{asm:diff} we can invoke \cref{thm:differentiability}
	to obtain the directional differentiability of the
	control-to-state map.
	Combined with the local optimality of $(\bar y, \bar u)$,
	this implies
	\begin{equation}
		\label{eq:B_stat}
		\dual{J_y(\cdot)}{S'(B\,\bar u + f;  B \, h) }_{V\dualspace,V}
		+
		\dual{J_u(\cdot)}{h}_{U\dualspace,U}
		\ge
		0
		\qquad\forall h \in U.
	\end{equation}
	Due to the Lipschitz estimate
	$\norm{S'(B\,\bar u + f;  B \, h)}_{V} \le C \, \norm{B \, h}_{V\dualspace}$,
	the above inequality implies
	\begin{equation*}
		\abs{\dual{J_u(\cdot)}{h}_{U\dualspace,U}}
		\le
		C \, \norm{B \, h}_{V\dualspace}
		\qquad\forall h \in U.
	\end{equation*}

	Hence, there is $p \in V\bidualspace \cong V$
	(by defining it as in the next line on the dense subspace $\image(B) \subset V\dualspace$ and extending it by continuity on the whole space $V\dualspace$)
	such that
	\begin{equation*}
		\dual{J_u(\cdot)}{h}_{U\dualspace,U}
		=
		\dual{p}{B \, h}_{V,V\dualspace}
		\qquad\forall h \in U.
	\end{equation*}
	This yields \eqref{eq:optimal_control_VI_OS_2}
	and
	\begin{equation*}
		\dual{J_y(\cdot)}{S'(B\,\bar u + f;  B \, h)}_{V\dualspace,V}
		+
		\dual{p}{B \, h}_{V,V\dualspace}
		\ge
		0
		\qquad\forall h \in U
		.
	\end{equation*}
	Using the density of $\image(B)$ in $V\dualspace$ we get
	\begin{equation*}
		\tag{$*$}
		\label{eq:opt_sys}
		\dual{J_y(\cdot)}{S'(B\,\bar u + f; h)}_{V\dualspace,V}
		+
		\dual{p}{h}_{V,V\dualspace}
		\ge
		0
		\qquad\forall h \in V\dualspace.
	\end{equation*}

	In what follows, we set $\KK := \KK_K(\bar z, \bar\lambda)$ for convenience.
	We recall that $S'(B \, \bar u + f; h)$
	is the unique solution
	of
	\begin{equation}
		\tag{$**$}
		\label{eq:two_star}
		\text{Find } x \in Q^{\bar y}(x)
		\quad\text{such that}\quad
		\dual{
			A'(\bar y) \, x
			-
			h
		}{v - x}
		\ge
		0
		\qquad
		\forall v \in Q^{\bar y}(x)
		,
	\end{equation}
	where the set-valued mapping $Q^{\bar y} \colon V \mto V$ is given by
	\begin{equation*}
		Q^{\bar y}(x)
		=
		\KK + \Phi'(\bar y) \, x
		.
	\end{equation*}

	We choose $h \in \KK^\circ$ in \eqref{eq:opt_sys}.
	Then, \eqref{eq:two_star} shows $S'(\bar u + f; h) = 0$ and, thus,
	\begin{equation*}
		\dual{p}{h}_{V,V\dualspace}
		\ge
		0
		\qquad\forall h \in \KK^\circ,
	\end{equation*}
	i.e., $p \in -\KK$, which shows \eqref{eq:optimal_control_VI_OS_3}.

	Now, we choose
	$v \in (I - \Phi'(\bar y))^{-1} \KK$
	and set $h = A'(\bar y) \, v$.
	It can be checked that \eqref{eq:two_star}
	implies
	$S'(B \, \bar u + f ; h) = v$.
	With this choice,
	\eqref{eq:opt_sys} implies
	\begin{equation*}
		\dual{J_y(\cdot) + A'(\bar y)^\star \, p}{v}_{V\dualspace,V}
		\ge
		0
		\qquad\forall v \in (I - \Phi'(\bar y))^{-1} \KK.
	\end{equation*}
	We define $\mu := -(I - \Phi'(\bar y))^{-\adjointsign}(J_y(\cdot) + A^\star \, p)$
	and get \eqref{eq:optimal_control_VI_OS_1} and
	\begin{equation*}
		\dual{(I - \Phi'(\bar y))\adjoint \mu}{v}_{V\dualspace,V}
		\le
		0
		\qquad\forall v \in (I - \Phi'(\bar y))^{-1} \KK.
	\end{equation*}
	Since $I - \Phi'(\bar y)$ is a bijection,
	this is equivalent to
	\eqref{eq:optimal_control_VI_OS_4}.

	The uniqueness of $p$ and $\mu$
	follows from the injectivity of $B\adjoint$
	and the bijectivity of
	$(I - \Phi'(\bar y))\adjoint$.
\end{proof}
The approach of
\cite[Section~6.1]{Christof2018}
can be used to provide strong stationarity systems
under less restrictive assumptions on $K$,
i.e., the polyhedricity assumption
can be replaced by the twice 
epi-differentiability of the indicator function $\delta_K$.

In the case that $\Phi$ is merely Bouligand differentiable, cf.\ \cref{rem:directional_Phi},
conditions \eqref{eq:optimal_control_VI_OS_1} and \eqref{eq:optimal_control_VI_OS_4}
could be rewritten as
\begin{equation}
	\label{eq:alternative}
	J_y(\cdot) + A'(\bar y)^\star \, p + \hat\mu = 0,
	\qquad
	\hat\mu \in \bracks[\big]{(I - \Phi'(\bar y;\cdot))^{-1} \KK_K(\bar z, \bar \lambda)}\polar,
\end{equation}
see the proof of \cref{thm:strong_stationarity}.

Finally,
we show that the system of strong stationarity
is of reasonable strength,
i.e., it implies the B-stationarity
\eqref{eq:B_stat}.
\begin{lemma}
	\label{lem:strong_yields_B}
	Let $(\bar y, \bar u)$ be a feasible point of \eqref{eq:optimal_control}
	such that \cref{asm:diff} is satisfied at $\bar f := B \, \bar u + f$.
	Moreover, suppose that $J$ is Fréchet differentiable.
	If there exist multipliers
	$p \in V$, $\mu \in V\dualspace$
	satisfying \eqref{eq:optimal_control_VI_OS},
	then \eqref{eq:B_stat} holds.
\end{lemma}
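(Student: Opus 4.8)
The plan is to verify \eqref{eq:B_stat} directly, by inserting the strong stationarity system \eqref{eq:optimal_control_VI_OS} into its left-hand side and checking that every resulting term has the right sign. Fix an arbitrary direction $h \in U$ and abbreviate $\KK := \KK_K(\bar z, \bar\lambda)$, with $\bar z, \bar\lambda$ as in \eqref{eq:barzbarlambda}. By \cref{asm:diff} and \cref{thm:differentiability}, the value $x := S'(B\,\bar u + f; B\,h)$ is well defined and is the unique solution of the linearized QVI \eqref{eq:linearized_QVI} with right-hand side $B\,h$. First I would translate the variational structure of this QVI into complementarity relations. Setting $w := (I - \Phi'(\bar y))\,x$, the constraint $x \in Q^{\bar y}(x) = \KK + \Phi'(\bar y)\,x$ gives $w \in \KK$, and the substitution $v = \tilde v + \Phi'(\bar y)\,x$ with $\tilde v \in \KK$ turns \eqref{eq:linearized_QVI} into $\dual{\zeta}{\tilde v - w} \ge 0$ for all $\tilde v \in \KK$, where I write $\zeta := A'(\bar y)\,x - B\,h$. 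Since $\KK$ is a cone, testing with $\tilde v = 0$ and $\tilde v = 2\,w$ yields $\dual{\zeta}{w} = 0$, and feeding this back into the inequality gives $\dual{\zeta}{\tilde v} \ge 0$ for all $\tilde v \in \KK$, i.e.\ $-\zeta \in \KK\polar$.

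Next I would use the two equations of the stationarity system to rewrite the B-stationarity functional. Substituting $J_u(\cdot) = B^\star p$ from \eqref{eq:optimal_control_VI_OS_2} and $J_y(\cdot) = -A'(\bar y)^\star p - (I - \Phi'(\bar y))\adjoint\mu$ from \eqref{eq:optimal_control_VI_OS_1}, moving the adjoints across the duality pairings, and using $(I - \Phi'(\bar y))\,x = w$, the left-hand side of \eqref{eq:B_stat} collapses to
\begin{equation*}
	\dual{J_y(\cdot)}{x} + \dual{J_u(\cdot)}{h}
	=
	-\dual{\zeta}{p} - \dual{\mu}{w}
	.
\end{equation*}
It then remains to check that both terms on the right are non-negative. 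From $p \in -\KK$ (i.e.\ $-p \in \KK$) together with $-\zeta \in \KK\polar$ we obtain $\dual{\zeta}{p} = \dual{-\zeta}{-p} \le 0$, hence $-\dual{\zeta}{p} \ge 0$; and from $\mu \in \KK\polar$ together with $w \in \KK$ we get $\dual{\mu}{w} \le 0$, hence $-\dual{\mu}{w} \ge 0$. Adding the two shows that the displayed quantity is non-negative, which is exactly \eqref{eq:B_stat}, and since $h \in U$ was arbitrary the claim follows.

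The argument is essentially mechanical, so I do not expect a deep obstacle; the two points that require care are the extraction of the complementarity relations $\dual{\zeta}{w} = 0$ and $-\zeta \in \KK\polar$ from the conic variational inequality — which relies on $\KK$ being a cone and on the change of variables $w = (I - \Phi'(\bar y))\,x$ that also underlies \cref{thm:differentiability} — and the bookkeeping of the dual pairings and signs when the adjoint operators $A'(\bar y)^\star$ and $(I - \Phi'(\bar y))\adjoint$ are transferred. In particular one must keep track of which factor lives in $V$ and which in $V\dualspace$, so that the cone and polar-cone inequalities are applied to the correct arguments.
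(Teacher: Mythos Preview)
Your proof is correct and follows exactly the same route as the paper: substitute \eqref{eq:optimal_control_VI_OS_1}--\eqref{eq:optimal_control_VI_OS_2} into the left-hand side of \eqref{eq:B_stat}, rewrite it as $-\dual{\zeta}{p}-\dual{\mu}{w}$ with $w=(I-\Phi'(\bar y))x$ and $\zeta=A'(\bar y)x-Bh$, and then use the four cone inclusions $w\in\KK$, $\zeta\in-\KK\polar$, $p\in-\KK$, $\mu\in\KK\polar$ to conclude non-negativity. The only difference is that you spell out the extraction of $-\zeta\in\KK\polar$ from the conic VI (including the intermediate relation $\dual{\zeta}{w}=0$, which is in fact not needed for the sign argument), whereas the paper simply asserts these inclusions as consequences of \eqref{eq:linearized_QVI}.
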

\begin{proof}
	For an arbitrary $h \in U$ we define
	$x := S'(B \, \bar u + f; B \, h)$.
	Then
	\begin{align*}
		\dual{J_y(\cdot)}{x}_{V\dualspace,V}
		+
		\dual{J_u(\cdot)}{h}_{U\dualspace,U}
		&=
		\dual{-A'(\bar y)^\star \, p - (I - \Phi'(\bar y))\adjoint\mu}{x}_{V\dualspace,V}
		+
		\dual{B\adjoint p}{h}_{U\dualspace,U}
		\\
		&=
		-\dual{p}{A'(\bar y) \, x - B \, h}_{V\dualspace,V}
		-\dual{\mu}{(I - \Phi'(\bar y)) \, x}_{V\dualspace,V}
		.
	\end{align*}
	From the linearized QVI \eqref{eq:linearized_QVI}
	and the strong stationarity system \eqref{eq:optimal_control_VI_OS},
	we have
	\begin{align*}
		(I - \Phi'(\bar y)) \, x &\in \KK_K(\bar z, \bar\lambda),
		&
		A'(\bar y) \, x - B \, h &\in - \KK_K(\bar z, \bar\lambda)\polar,
		\\
		p &\in -\KK_K(\bar z, \bar \lambda),
		&
		\mu &\in \KK_K(\bar z, \bar \lambda)\polar,
	\end{align*}
	where we used \eqref{eq:barzbarlambda}.
	Thus, \eqref{eq:B_stat} follows.
\end{proof}

\subsection*{Acknowledgement}
This work is supported by the DFG grant
\emph{Analysis and Solution Methods for Bilevel Optimal Control Problems}
(grant number WA3636/4-1)
within the Priority Program SPP 1962
(Non-smooth and Complementarity-based Distributed Parameter Systems: Simulation and Hierarchical Optimization).

The simple proof of \cref{lem:wlsc}
was pointed out by Felix Harder.

\printbibliography

\end{document}